\newtheorem{theorem}{Theorem}[section]
\theoremstyle{definition}
\newtheorem{defn}[theorem]{Definition}
\newtheorem{lemma}[theorem]{Lemma}
\newtheorem{prop-def}{Proposition-Definition}[section]
\newtheorem{coro-def}{Corollary-Definition}[section]
\newtheorem{claim}[theorem]{Claim}
\newtheorem{remark}[theorem]{Remark}
\newcommand{\nc}{\newcommand}
\nc{\tred}[1]{\textcolor{red}{#1}}
\nc{\tblue}[1]{\textcolor{blue}{#1}}
\nc{\tgreen}[1]{\textcolor{green}{#1}}
\nc{\tpurple}[1]{\textcolor{purple}{#1}}
\nc{\btred}[1]{\textcolor{red}{\bf #1}}
\nc{\btblue}[1]{\textcolor{blue}{\bf #1}}
\nc{\btgreen}[1]{\textcolor{green}{\bf #1}}
\nc{\btpurple}[1]{\textcolor{purple}{\bf #1}}
\nc{\NN}{{\mathbb N}}
\nc{\ncsha}{{\mbox{\cyr X}^{\mathrm NC}}} \nc{\ncshao}{{\mbox{\cyr
X}^{\mathrm NC}_0}}
\renewcommand{\frak}{\mathfrak}
\newcommand{\efootnote}[1]{}
\renewcommand{\textbf}[1]{}
\newcommand{\delete}[1]{}
\nc{\mlabel}[1]{\label{#1}}  
\nc{\mcite}[1]{\cite{#1}}  
\nc{\mref}[1]{\ref{#1}}  
\nc{\mbibitem}[1]{\bibitem{#1}} 
\nc{\mlabel}[1]{\label{#1}  
{\hfill \hspace{1cm}{\small\tt{{\ }\hfill(#1)}}}}
\nc{\mcite}[1]{\cite{#1}{\small{\tt{{\ }(#1)}}}}  
\nc{\mref}[1]{\ref{#1}{{\tt{{\ }(#1)}}}}  
\nc{\mbibitem}[1]{\bibitem[\bf #1]{#1}} 
\nc{\li}[1]{\textcolor{red}{Li:#1}}
\nc{\yi}[1]{\textcolor{blue}{Yi: #1}}
\nc{\xing}[1]{\textcolor{purple}{Xing:#1}}
\nc{\revise}[1]{\textcolor{red}{#1}}
\nc{\mo}{{\rm M}}
\nc{\barot}{\overline{\otimes}}
\nc{\opa}{\ast} \nc{\opb}{\odot} \nc{\op}{\bullet} \nc{\pa}{\frakL}
\nc{\arr}{\rightarrow} \nc{\lu}[1]{(#1)} \nc{\mult}{\mrm{mult}}
\nc{\diff}{\mathfrak{Diff}}
\nc{\opc}{\sharp}\nc{\opd}{\natural}
\nc{\ope}{\circ}
\nc{\dpt}{\mathrm{d}}
\nc{\diam}{alternating\xspace}
\nc{\Diam}{Alternating\xspace}
\nc{\cdiam}{canonical alternating\xspace}
\nc{\Cdiam}{Canonical alternating\xspace}
\nc{\AW}{\mathcal{A}}
\nc{\mrbo}{modified RBO\xspace }
\nc{\ari}{\mathrm{ar}}
\nc{\lef}{\mathrm{lef}}
\nc{\Sh}{\mathrm{ST}}
\nc{\Cr}{\mathrm{Cr}}
\nc{\st}{{Schr\"oder tree}\xspace}
\nc{\sts}{{Schr\"oder trees}\xspace}
\nc{\vertset}{\Omega} 
\nc{\pb}{{\mathrm{pb}}}
\nc{\Lf}{{\mathrm{Lf}}}
\nc{\lft}{{left tree}\xspace}
\nc{\lfts}{{left trees}\xspace}
\nc{\fat}{{fundamental averaging tree}\xspace}
\nc{\fats}{{fundamental averaging trees}\xspace}
\nc{\avt}{\mathrm{Avt}}
\nc{\rass}{{\mathit{RAss}}}
\nc{\aass}{{\mathit{AAss}}}
\nc{\vin}{{\mathrm Vin}}    
\nc{\lin}{{\mathrm Lin}}    
\nc{\inv}{\mathrm{I}n}
\nc{\gensp}{V} 
\nc{\genbas}{\mathcal{V}} 
\nc{\bvp}{V_P}     
\nc{\gop}{{\,\omega\,}}     
\nc{\bin}[2]{ (_{\stackrel{\scs{#1}}{\scs{#2}}})}  
\nc{\binc}[2]{ \left (\!\! \begin{array}{c} \scs{#1}\\
    \scs{#2} \end{array}\!\! \right )}  
\nc{\bincc}[2]{  \left ( {\scs{#1} \atop
    \vspace{-1cm}\scs{#2}} \right )}  
\nc{\bs}{\bar{S}} \nc{\cosum}{\sqsubset} \nc{\la}{\longrightarrow}
\nc{\rar}{\rightarrow} \nc{\dar}{\downarrow} \nc{\dprod}{**}
\nc{\dap}[1]{\downarrow \rlap{$\scriptstyle{#1}$}}
\nc{\md}{\mathrm{dth}} \nc{\uap}[1]{\uparrow
\rlap{$\scriptstyle{#1}$}} \nc{\defeq}{\stackrel{\rm def}{=}}
\nc{\disp}[1]{\displaystyle{#1}} \nc{\dotcup}{\
\displaystyle{\bigcup^\bullet}\ } \nc{\gzeta}{\bar{\zeta}}
\nc{\hcm}{\ \hat{,}\ } \nc{\hts}{\hat{\otimes}}
\nc{\free}[1]{\bar{#1}}
\nc{\uni}[1]{\tilde{#1}} \nc{\hcirc}{\hat{\circ}} \nc{\lleft}{[}
\nc{\lright}{]} \nc{\lc}{\lfloor} \nc{\rc}{\rfloor}
\nc{\curlyl}{\left \{ \begin{array}{c} {} \\ {} \end{array}
    \right .  \!\!\!\!\!\!\!}
\nc{\curlyr}{ \!\!\!\!\!\!\!
    \left . \begin{array}{c} {} \\ {} \end{array}
    \right \} }
\nc{\longmid}{\left | \begin{array}{c} {} \\ {} \end{array}
    \right . \!\!\!\!\!\!\!}
\nc{\onetree}{\bullet} \nc{\ora}[1]{\stackrel{#1}{\rar}}
\nc{\ola}[1]{\stackrel{#1}{\la}}
\nc{\ot}{\otimes} \nc{\mot}{{{\boxtimes\,}}}
\nc{\otm}{\overline{\boxtimes}} \nc{\sprod}{\bullet}
\nc{\scs}[1]{\scriptstyle{#1}} \nc{\mrm}[1]{{\rm #1}}
\nc{\margin}[1]{\marginpar{\rm #1}}   
\nc{\dirlim}{\displaystyle{\lim_{\longrightarrow}}\,}
\nc{\invlim}{\displaystyle{\lim_{\longleftarrow}}\,}
\nc{\mvp}{\vspace{0.3cm}} \nc{\tk}{^{(k)}} \nc{\tp}{^\prime}
\nc{\ttp}{^{\prime\prime}} \nc{\svp}{\vspace{2cm}}
\nc{\vp}{\vspace{8cm}} \nc{\proofbegin}{\noindent{\bf Proof: }}
\nc{\proofend}{$\blacksquare$ \vspace{0.3cm}}
\nc{\modg}[1]{\!<\!\!{#1}\!\!>}
\nc{\intg}[1]{F_C(#1)} \nc{\lmodg}{\!
<\!\!} \nc{\rmodg}{\!\!>\!}
\nc{\cpi}{\widehat{\Pi}}
\nc{\sha}{{\mbox{\cyr X}}}  
\nc{\ssha}{{\mbox{\cyrs X}}} 
\nc{\shpr}{\diamond}    
\nc{\shp}{\ast} \nc{\shplus}{\shpr^+}
\nc{\shprc}{\shpr_c}    
\nc{\msh}{\ast} \nc{\zprod}{m_0} \nc{\oprod}{m_1}
\nc{\labs}{\mid\!} \nc{\rabs}{\!\mid}
\nc{\sqmon}[1]{\langle #1\rangle}
\nc{\mmbox}[1]{\mbox{\ #1\ }} \nc{\dep}{\mrm{dep}} \nc{\fp}{\mrm{FP}}
\nc{\rchar}{\mrm{char}} \nc{\End}{\mrm{End}} \nc{\Fil}{\mrm{Fil}}
\nc{\Mor}{Mor\xspace} \nc{\gmzvs}{gMZV\xspace}
\nc{\gmzv}{gMZV\xspace} \nc{\mzv}{MZV\xspace}
\nc{\mzvs}{MZVs\xspace} \nc{\Hom}{\mrm{Hom}} \nc{\id}{\mrm{id}}
\nc{\im}{\mrm{im}} \nc{\incl}{\mrm{incl}} \nc{\map}{\mrm{Map}}
\nc{\mchar}{\rm char} \nc{\nz}{\rm NZ} \nc{\supp}{\mathrm Supp}
\nc{\Alg}{\mathbf{Alg}} \nc{\Bax}{\mathbf{Bax}} \nc{\bff}{\mathbf f}
\nc{\bfk}{{\bf k}} \nc{\bfone}{{\bf 1}} \nc{\bfx}{\mathbf x}
\nc{\bfy}{\mathbf y}
\nc{\base}[1]{\bfone^{\otimes ({#1}+1)}} 
\nc{\Cat}{\mathbf{Cat}}
\nc{\detail}{\marginpar{\bf More detail}
    \noindent{\bf Need more detail!}
    \svp}
\nc{\Int}{\mathbf{Int}} \nc{\Mon}{\mathbf{Mon}}
\nc{\rbtm}{{shuffle }} \nc{\rbto}{{Rota-Baxter }}
\nc{\remarks}{\noindent{\bf Remarks: }} \nc{\Rings}{\mathbf{Rings}}
\nc{\Sets}{\mathbf{Sets}} \nc{\wtot}{\widetilde{\odot}}
\nc{\wast}{\widetilde{\ast}} \nc{\bodot}{\bar{\odot}}
\nc{\bast}{\bar{\ast}} \nc{\hodot}[1]{\odot^{#1}}
\nc{\hast}[1]{\ast^{#1}} \nc{\mal}{\mathcal{O}}
\nc{\tet}{\tilde{\ast}} \nc{\teot}{\tilde{\odot}}
\nc{\oex}{\overline{x}} \nc{\oey}{\overline{y}}
\nc{\oez}{\overline{z}} \nc{\oef}{\overline{f}}
\nc{\oea}{\overline{a}} \nc{\oeb}{\overline{b}}
\nc{\weast}[1]{\widetilde{\ast}^{#1}}
\nc{\weodot}[1]{\widetilde{\odot}^{#1}} \nc{\hstar}[1]{\star^{#1}}
\nc{\lae}{\langle} \nc{\rae}{\rangle}
\nc{\lf}{\lfloor}
\nc{\rf}{\rfloor}
\nc{\QQ}{{\mathbb Q}}
\nc{\RR}{{\mathbb R}} \nc{\ZZ}{{\mathbb Z}}
\nc{\cala}{{\mathcal A}} \nc{\calb}{{\mathcal B}}
\nc{\calc}{{\mathcal C}}
\nc{\cald}{{\mathcal D}} \nc{\cale}{{\mathcal E}}
\nc{\calf}{{\mathcal F}} \nc{\calg}{{\mathcal G}}
\nc{\calh}{{\mathcal H}} \nc{\cali}{{\mathcal I}}
\nc{\call}{{\mathcal L}} \nc{\calm}{{\mathcal M}}
\nc{\caln}{{\mathcal N}} \nc{\calo}{{\mathcal O}}
\nc{\calp}{{\mathcal P}} \nc{\calr}{{\mathcal R}}
\nc{\cals}{{\mathcal S}} \nc{\calt}{{\mathcal T}}
\nc{\calu}{{\mathcal U}} \nc{\calw}{{\mathcal W}} \nc{\calk}{{\mathcal K}}
\nc{\calx}{{\mathcal X}} \nc{\CA}{\mathcal{A}}
\nc{\fraka}{{\mathfrak a}} \nc{\frakA}{{\mathfrak A}}
\nc{\frakb}{{\mathfrak b}} \nc{\frakB}{{\mathfrak B}}
\nc{\frakD}{{\mathfrak D}} \nc{\frakF}{\mathfrak{F}}
\nc{\frakf}{{\mathfrak f}} \nc{\frakg}{{\mathfrak g}}
\nc{\frakH}{{\mathfrak H}} \nc{\frakL}{{\mathfrak L}}
\nc{\frakM}{{\mathfrak M}} \nc{\bfrakM}{\overline{\frakM}}
\nc{\frakm}{{\mathfrak m}} \nc{\frakP}{{\mathfrak P}}
\nc{\frakN}{{\mathfrak N}} \nc{\frakp}{{\mathfrak p}}
\nc{\frakS}{{\mathfrak S}} \nc{\frakT}{\mathfrak{T}}
\nc{\frakX}{\mathfrak{X}} \nc{\frakx}{\mathfrak{x}}
\nc{\fraky}{\mathfrak{y}}
\nc{\BS}{\mathbb{S}}
\font\cyr=wncyr10 \font\cyrs=wncyr7
\nc{\xigou}[1]{\textcolor{blue}{Xigou: #1}}
\nc{\UN}{U_{N}} \nc{\lbar}[1]{\overline{#1}}
\nc{\FN}{F_{\kappa}} \nc{\FNN}{F_{-\lambda^2}}
\nc{\bre}{{\rm bre}} \nc{\w}{{\rm bre}}
\nc{\altx}{\Lambda}
\nc{\spr}{\cdot} \nc{\hma}{\mathcal{H}}
\nc{\rts}{\stackrel{\rightarrow}{\shpr}}
\nc{\ox}{\overline{\frak x}}
\nc{\oX}{\overline{X}} \nc{\mt}{T}
\nc{\mtw}{{\rm MTW}} \nc{\vep}{\varepsilon}
\nc{\mop}{P_A} \nc{\aop}{\alpha} \nc{\apr}{\cdot}
\nc{\otp}{Q} \nc{\ottp}{\tilde{Q}}
\begin{document}

\title[Modified Rota-Baxter algebras and Hopf algebras]{Free modified Rota-Baxter algebras and Hopf algebras}
%
\author{Xigou Zhang}
\address{Department of Mathematics, Jiangxi Normal University, Nanchang, Jiangxi 330022, China}
\email{xyzhang@jxnu.edu.cn}

\author{Xing Gao}
\address{School of Mathematics and Statistics, Key Laboratory of Applied Mathematics and Complex Systems, Lanzhou University, Lanzhou, Gansu 730000, P.\,R. China}
\email{gaoxing@lzu.edu.cn}

\author{Li Guo}
\address{Department of Mathematics and Computer Science,
         Rutgers University,
         Newark, NJ 07102, USA}
\email{liguo@rutgers.edu}

\date{\today}
\begin{abstract}
The notion of a modified Rota-Baxter algebra comes from the combination of those of a Rota-Baxter algebra and a modified Yang-Baxter equation. In this paper, we first construct free modified Rota-Baxter algebras. We then equip a free modified Rota-Baxter algebra with a bialgebra structure by a cocycle construction. Under the assumption that the generating algebra is a connected bialgebra, we further equip the free modified Rota-Baxter algebra with a Hopf algebra structure.
\end{abstract}

\keywords{Modified Rota-Baxter algebra, Rota-Baxter algebra, Hopf algebra, bracketed word, cocycle}

\subjclass[2010]{16T99,16W99,16S10}

\maketitle

\tableofcontents

\setcounter{section}{0}

\allowdisplaybreaks


\section{Introduction}

This paper studies free objects in the category of modified Rota-Baxter algebras, a concept coming from the combination of a Rota-Baxter algebra and a modified Yang-Baxter equation. It also equips the free objects with bialgebra and Hopf algebra structures.

For a fixed constant $\lambda$, a {\bf Rota-Baxter operator} of weight $\lambda$ is a linear operator $P$ on an associative algebra $R$ that satisfies the {\bf Rota-Baxter equation}:
\begin{equation}
 P(x)P(y)=P(P(x)y) + P(xP(y)) +\lambda P(xy), \quad \forall x, y\in R.
 \mlabel{eq:rbe}
 \end{equation}
An associative algebra $R$ equipped with a Rota-Baxter operator is called a {\bf Rota-Baxter algebra}, a notion originated from the probability study of G. Baxter~\mcite{Ba} in 1960. Later it attracted the attention of well-known mathematicians such as Atkinson, Cartier and Rota~\mcite{At,Ca,Ro}. After some years of dormancy, its study experienced a quite remarkable renascence since late 1990s, with many applications in mathematics and physics~\mcite{Ag,BBGN,EG,GK1,JZ,MN,ML,MY,PBG}. In particular, it appeared as one of the fundamental algebraic structures in the profound work of Connes and Kreimer on renormalization of quantum field theory~\mcite{CK}. See~\mcite{Gub} for further details and references.

The concept of the classical Yang-Baxter equation arose from the study of inverse scattering theory and is also related to Schouten bracket in differential geometry. Further it can be regarded as the classical limit of the quantum Yang-Baxter equation, named after C.-Y. Yang and R. Baxter. In the 1980s, Semonov-Tian-Shansky~\cite{Sem} found that,  under suitable conditions, the operator form of the classical Yang-Baxter equation is precisely the Rota-Baxter identity \eqref{eq:rbe} (of weight 0) on a Lie algebra. As a modified form of the operator form of the classical Yang-Baxter equation, he also introduced in that paper the {\bf modified classical Yang-Baxter equation}:
\begin{equation}
[P(x),P(y)]=P[P(x),y]+P[x,P(y)]- [x, y],
\mlabel{eq:mybel}
\end{equation}
later found applications in the study of generalized Lax pairs and affine geometry on Lie groups~\mcite{BGN1,Bo,KS}.
As the associative analogue of Eq.~(\mref{eq:mybel}), the equation
\begin{equation}
P(x)  P(y) = P (P(x)y) + P(xP(y))- xy. \mlabel{eq:mybea}
\end{equation}
is called the {\bf modified associative Yang-Baxter equation}, which has been applied to the study of extended $\calo$-operators, associative Yang-Baxter equations, infinitesimal bialgebras and dendriform algebras~\mcite{BGN2,BGN3,E}.

In the spirit of the aforementioned Yang-Baxter equation to Rota-Baxter operator connection, a linear operator $P$ satisfying Eq.~(\mref{eq:mybea}) is called a {\bf modified Rota-Baxter operator} and an associative algebra $R$ equipped with a modified Rota-Baxter operator is called a {\bf modified Rota-Baxter algebra}.

Integrating the notions of the Rota-Baxter algebra and modified Rota-Baxter algebra, the concept of a modified Rota-Baxter algebra with a weight was introduced in~\mcite{BGN2} as a special case of extended $\mathcal{O}$-operators in connection with the extended associative Yang-Baxter equation. The latter motivated their study in the Lie algebra context~\mcite{BGN4}.
In~\mcite{ZGG}, free commutative modified Rota-Baxter algebras were constructed by means of a modified quasi-shuffle product and modified stuffle product, in analogy to the case of free commutative Rota-Baxter algebras~\mcite{Ca,GK1}.

Considering the close relationship between the modified Rota-Baxter (associative) algebras and the modified Yang-Baxter equation for Lie algebras, it is especially interesting to consider noncommutative modified Rota-Baxter algebras. This is the subject of study of this paper, focusing on the construction of the free objects and the Hopf algebra structures on the free objects. More precisely, in Section~\mref{sec:free}, we obtain an explicit construction of the free modified Rota-Baxter algebra on an algebra, by giving a natural basis of the algebra and the corresponding multiplication table. In Section~\mref{sec:hopf}, we further provide a bialgebra and then a Hopf algebra structure on the free modified Rota-Baxter algebra.

\vskip0.1in

\noindent
{\bf Notations.} For the rest of this paper, unless otherwise specified, algebras are {\bf associative unitary} algebras over a commutative unitary algebra $\bfk$.

\section{Free Modified Rota-Baxter Algebras}
\mlabel{sec:free}
In this section we construct free modified Rota-Baxter algebras. We give the construction in Section~\mref{ss:bmrb}, leading to the main Theorem~\mref{thm:freeao} of this section. The proof of the theorem is completed in Section~\mref{ss:proof}.

\subsection{The general construction of the free modified Rota-Baxter algebras}
\mlabel{ss:bmrb}

We begin with the general definition of modified Rota-Baxter algebras.

\begin{defn}
Let $R$ be a $\bfk$-algebra and $\kappa\in \bfk$. A linear map $P:R\to R$ is called a {\bf modified Rota-Baxter operator} of weight $\kappa$ if $P$ satisfies the operator identity
\begin{equation}
P(u)P(v)=P(uP(v))+P(P(u)v)+\kappa uv, \quad \text{for all } u, v\in R.
\mlabel{eq:mrbo}
\end{equation}
Then the pair $(R, P)$ or simply $R$ is called a {\bf modified Rota-Baxter algebra} of weight $\kappa$.
\mlabel{de:mrba}
\end{defn}

Together with the algebra homomorphisms between the algebras that preserves the linear operators, the class of modified Rota-Baxter algebras of weight $\kappa$ forms a category. We refer the reader to~\mcite{ZGG} and the references therein for basic properties of modified Rota-Baxter algebras and focus our attention to the construction of free modified Rota-Baxter algebras. We first give the definition.

\begin{defn}
Let $A$ be a $\bfk$-algebra. A {\bf free modified Rota-Baxter algebra on $A$} is a modified Rota-Baxter algebra $(F(A),P_A)$ together with an algebra homomorphism $j:A\longrightarrow F(A)$ with the property that, for any given modified Rota-Baxter algebra $(R,P)$ and algebra homomorphism $f:A \longrightarrow R$, there is a unique homomorphism $\free{f}: F(A)\longrightarrow R$ of modified Rota-Baxter algebras such that $\free{f} j =f$.
\mlabel{de:fmrba}
\end{defn}

Note that taking $A$ to be the free algebra $\bfk\langle Y \rangle$ on a set $Y$, we obtain the free modified Rota-Baxter algebra on the set $Y$.
Let $A$ be a  $\bfk$-algebra with a
$\bfk$-basis $X$. We first display a $\bfk$-basis $\frak X_\infty$ of
free modified Rota-Baxter algebras in terms of bracketed words from the alphabet set $X$.

\begin{remark}
The set $\frak X_\infty$ is called the set of {\bf Rota-Baxter words} that was applied to construct free Rota-Baxter algebras~\mcite{EG}. Enumeration properties and generating functions of Rota-Baxter words were obtained in~\mcite{GS} to which we refer the reader for further details.
\mlabel{rk:rbw}
\end{remark}

Let $\lc$ and $\rc$ be two different symbols not in $X$, called brackets,
and let $X':= X\cup \{\lc,\rc\}$. Denote by $M(X')$ the free monoid generated by $X'$.

\begin{defn} (\mcite{EG2,Gub})
Let $Y,Z$ be two subsets of $M(X')$. Define the {\bf alternating
product} of $Y$ and $Z$ to be \allowdisplaybreaks{
\begin{eqnarray*}
\altx(Y,Z)& = &\Big( \bigsqcup_{r\geq 1} \big (Y\lc Z\rc \big)^r \Big)
\bigsqcup
    \Big(\bigsqcup_{r\geq 0} \big (Y\lc Z\rc \big)^r  Y\Big)
    \bigsqcup \Big( \bigsqcup_{r\geq 1} \big( \lc Z\rc Y \big )^r \Big)
 \bigsqcup \Big( \bigsqcup_{r\geq 0} \big (\lc Z\rc Y\big )^r \lc Z\rc \Big).
\end{eqnarray*}}
Here $\sqcup$ stands for disjoint union.
\end{defn}

For example, $y_1\lc z_1\rc y_2, \lc z_1\rc y_1 \lc z_2\rc, y_1, y_2\in Y, z_1, z_2\in Z,$ are elements in $\altx(Y,Z)$. But $\lc z_1\rc \lc z_2\rc$ are not in $\altx(Y,Z)$.

We construct a sequence $\frak X_n$ of subsets of $M(X')$ by the
following recursion on $n\geq 0$. For the initial step, we define $\frak X_0:=X\cup \{1\}$.
For the inductive step, we define
\begin{equation*}
\frak X_{n+1} := \altx(X,\frak X_n) \cup \{1\}\, \text{ for } n\geq 1.
\end{equation*}

For example, for $x_1,x_2,x_3\in X$, the elements $\lc x_1\rc x_2 \lc x_3\rc, x_1\lc x_2\lc x_3\rc \rc$ and $\lc \lc x_1 \rc x_2 \lc\lc x_3\rc\rc\rc$ are all in $\frak X_3$, the first two are in $\frak X_2$ and the first one is in $\frak X_1$.

From the definition we have
$\frak X_1\supseteq \frak X_0$. Assuming $\frak X_n\supseteq \frak X_{n-1}$, we get
$$\frak X_{n+1}=\altx(X,\frak X_n) \supseteq \altx(X,\frak X_{n-1}) =\frak X_n.$$
Thus we can define $$\frakX_\infty:= \dirlim \frakX_n  = \bigcup_{n\geq 0} \frak X_n.$$
For $\frakx\in \frakX_\infty$, we define the {\bf depth} $\dep(\frakx)$ of $\frakx$
to be $$\dep(\frakx):= \min\{ n\mid \frakx \in \frakX_n\}.$$
Further, every $\frak x\in \frak X_\infty\setminus\{1\}$ has a unique {\bf
standard decomposition:}
\begin{equation}
 \frak x=\frak x_1 \cdots  \frak x_b,
\mlabel{eq:st}
\end{equation}
where $\frak x_i$, $1\leq i\leq b$, are alternatively in $X$ or in
$\lc \frak X_\infty\rc$.
We call $b$ to be the {\bf breadth} of $\frak x$, denoted by $\bre(\frak x)$.
We define the {\bf head} $h(\frak x)$ of $\frak x$ to be 0 (resp. 1) if $\frak x_1$ is in $X$ (resp. in $\lc \frak X_\infty \rc$). Similarly define the {\bf tail} $t(\frak x)$ of
$\frak x$ to be 0 (resp. 1) if $\frak x_b$ is in $X$ (resp. in $\lc
\frak X_\infty \rc$).

Fix a $\kappa\in \bfk$. We will equip the free $\bfk$-module
\begin{equation}
\FN(A)=\bfk\,\frakX_\infty=\bigoplus_{\frak x\in \frak X_\infty} \bfk \frak x
\mlabel{eq:fma}
\end{equation}
with a multiplication $\shpr:=\shpr_\kappa$. This is accomplished by defining $\frak x\shpr
\frak x'\in \FN(A)$ for basis elements $\frak x,\frak x'\in \frak X_\infty$ and then
extending bilinearly. Roughly speaking, the product of $\frak x$ and
$\frak x'$ is defined to be the concatenation whenever $t(\frak x)\neq
h(\frak x')$. When $t(\frak x)=h(\frak x')$, the product is defined by
the product in $A$ or by the modified Rota-Baxter identity in
Eq.~(\mref{eq:mrbo}).

To be precise, we use induction on the sum $n:= \dep(\frak x)+\dep(\frak x')\geq 0$ to define
$\frakx\diamond \frakx'$. For the initial step of $n=0$,
$\frak x,\frak x'$ are in $X$ and so are in $A$. Then we define
$$\frak x\shpr \frak x':=\frak x \spr \frak x'\in A \subseteq \FN(A).$$
Here $\spr$ is the product in $A$.

For the inductive step, let $k\geq 0$ be given and assume that $\frak x\shpr \frak x'$ have been defined for all
$\frak x,\frak x'\in \frak X_\infty$ with $n = \dep(\frak x)+\dep(\frak x') \leq k$. Then consider $\frak x, \frak x'\in \frak X_\infty$ with $n= \dep(\frak x)+\dep(\frak x')=k+1$.
First treat the case when $\bre(\frak x)=\bre(\frak x')=1$. Then $\frak x$ and
$\frak x'$ are in $X$ or $\lc \frak X_\infty\rc$. Since $n=k+1\geq 1$, $\frak x$ and $\frak x'$ cannot be both in $X$. We accordingly define
\begin{equation}
\frak x \shpr \frak x' :=\left \{ \begin{array}{ll}
\frak x \frak x', & {\rm if\ } \frak x\in X\,\text{ and }\, \frak x'\in \lc \frak X_\infty\rc,\\
\frak x \frak x', & {\rm if\ } \frak x\in \lc \frak X_\infty\rc\,\text{ and }\, \frak x'\in X,\\
\lc \lc \ox\rc \shpr \ox'\rc +\lc \ox \shpr \lc \ox'\rc \rc +\kappa
\ox \shpr \ox', & {\rm if\ } \frak x=\lc \ox\rc\,\text{ and }\, \frak x'=\lc
\ox'\rc \in \lc \frak X_\infty \rc.
\end{array} \right .
\mlabel{eq:shprod}
\end{equation}
Here the product in the first and second case are by concatenation and in the third case is
by the induction hypothesis since for the three products on the
right hand side we have
\begin{eqnarray*}
\dep(\lc\ox \rc)+ \dep(\ox') &=& \dep(\lc \ox \rc)+\dep(\lc \ox' \rc)-1
= \dep(\frak x)+\dep(\frak x')-1 = k,\\
\dep(\ox)+\dep(\lc \ox'\rc) &=& \dep(\lc \ox \rc)+\dep(\lc \ox'\rc)-1
= \dep(\frak x)+ \dep(\frak x')-1 = k,\\
\dep(\ox)+ \dep(\ox') &=& \dep(\lc \ox \rc)-1+ \dep(\lc \ox' \rc)-1 =
\dep(\frak x)+\dep(\frak x')-2 = k-1.
\end{eqnarray*}

We next treat the case when $\bre(\frak x)>1$ or $\bre(\frak x')>1$. Let
$\frak x=\frak x_1\cdots\frak x_b$ and
$\frak x'=\frak x'_1\cdots\frak x'_{b'}$ be the standard decompositions
from Eq.~(\mref{eq:st}). We then define
\begin{equation}
\frak x \shpr \frak x'= \frak x_1\cdots \frak x_{b-1}(\frak x_b\shpr
\frak x'_1)\,
    \frak x'_{2}\cdots \frak x'_{b'}
\mlabel{eq:cdiam}
\end{equation}
where $\frak x_b\shpr \frak x'_1$ is defined by
Eq.~(\mref{eq:shprod}) and the rest is given by concatenation.
Extending $\shpr$ bilinearly, we obtain a binary operation
$$ \shpr: \FN(A)\otimes \FN(A) \to \FN(A).$$
This completes the definition of
$\shpr$.

\begin{lemma}
Let $\frak x,\frak x'\in \frak X_\infty$.
\begin{enumerate}
\item $h(\frak x)=h(\frak x\shpr \frak x')$ and $t(\frak x')=t(\frak x\shpr \frak x')$.
\mlabel{it:mat0}
\item If $t(\frak x)\neq h(\frak x')$, then
$\frak x \shpr \frak x' =\frak x \frak x'$ (concatenation).
\mlabel{it:mat1}

\item If $t(\frak x)\neq h(\frak x')$, then for any $\frak x''\in \frak X_\infty$,
$$(\frak x\frak x')\shpr \frak x'' =\frak x(\frak x' \shpr \frak x'')\,\text{ and }\,
\frak x''\shpr (\frak x \frak x') =(\frak x'' \shpr \frak x) \frak x'.$$
\mlabel{it:mat2}
\end{enumerate}
\mlabel{lem:match}
\end{lemma}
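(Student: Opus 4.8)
The plan is to establish all three assertions simultaneously by induction on $n := \dep(\frak x) + \dep(\frak x')$, mirroring the recursion that defines $\shpr$ in \mref{eq:shprod} and \mref{eq:cdiam}. The three parts are genuinely interdependent: the concatenation identities in \mref{it:mat2} rely on knowing, via \mref{it:mat0}, that a product disturbs neither the head of its left factor nor the tail of its right factor, so I would carry all three through a single induction with \mref{it:mat0} and \mref{it:mat1} feeding \mref{it:mat2} at each level. For the base case $n=0$ both words lie in $X$ and the product is taken in $A$, so every resulting term has head and tail $0$, matching $h(\frak x)=t(\frak x')=0$.

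For the inductive step I would first dispose of \mref{it:mat1}. When $t(\frak x)\neq h(\frak x')$, the standard decompositions from \mref{eq:st} meet at a junction $\frak x_b\shpr\frak x'_1$ in which $\frak x_b$ and $\frak x'_1$ have opposite types, so the first two clauses of \mref{eq:shprod} apply and return the plain concatenation $\frak x_b\frak x'_1$; substituting into \mref{eq:cdiam} (when some breadth exceeds one) collapses the whole product to $\frak x\frak x'$, with no real use of the induction hypothesis. Part \mref{it:mat0} is then handled by cases on the breadths. The two concatenation clauses preserve head and tail on the nose. When $\bre(\frak x)>1$ or $\bre(\frak x')>1$, formula \mref{eq:cdiam} leaves the extreme pieces $\frak x_1$ and $\frak x'_{b'}$ untouched, so $h$ and $t$ are read off directly while the induction hypothesis controls the junction factor.

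The crux, and the step I expect to cost the most, is the remaining breadth-one case $\frak x=\lc\ox\rc$, $\frak x'=\lc\ox'\rc$, where $t(\frak x)=h(\frak x')=1$ and the third clause of \mref{eq:shprod} invokes the modified Rota-Baxter identity \mref{eq:mrbo}. The two genuinely bracketed summands $\lc\lc\ox\rc\shpr\ox'\rc$ and $\lc\ox\shpr\lc\ox'\rc\rc$ manifestly have head and tail $1$, agreeing with $h(\frak x)$ and $t(\frak x')$; the delicate term is the unbracketed correction $\kappa\,\ox\shpr\ox'$, exactly the feature separating the modified theory from the classical one, in which the analogous term $\lambda\lc\ox\shpr\ox'\rc$ is itself bracketed. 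This term has strictly smaller depth and its head and tail are governed by the induction hypothesis applied to $\ox\shpr\ox'$; the care required is in tracking it along the depth filtration so that it does not corrupt the head/tail bookkeeping at top level, and in verifying that reinserting such a lower-depth factor into an ambient concatenation $\frak x_1\cdots\frak x_{b-1}(-)\frak x'_2\cdots\frak x'_{b'}$ still produces a legitimate alternating word.

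Finally, for \mref{it:mat2} I would use \mref{it:mat1} to recognise $\frak x\frak x'$ as an honest element of $\frak X_\infty$ whose standard decomposition is the concatenation of those of $\frak x$ and $\frak x'$, so that the junction governing $(\frak x\frak x')\shpr\frak x''$ is literally the junction governing $\frak x'\shpr\frak x''$. Expanding both sides through \mref{eq:cdiam} and comparing piece by piece then yields the first identity, with the left factor $\frak x$ riding along as a passive prefix precisely because \mref{it:mat0} guarantees that every term of $\frak x'\shpr\frak x''$ begins with the head of $\frak x'$ and hence still alternates with the tail of $\frak x$; the mirror identity is symmetric.
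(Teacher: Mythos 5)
Your simultaneous induction cannot close at the very point you yourself single out as the crux, because the claim you need there is false. In the bracket--bracket case, the induction hypothesis applied to $\ox\shpr\ox'$ yields $h(\ox\shpr\ox')=h(\ox)$ and $t(\ox\shpr\ox')=t(\ox')$, which need not be $1$. Concretely, for $x_1,x_2\in X$, Eq.~(\mref{eq:shprod}) gives
\[
\lc x_1\rc \shpr \lc x_2\rc \;=\; \lc \lc x_1\rc x_2\rc \;+\; \lc x_1\lc x_2\rc\rc \;+\; \kappa\, x_1\spr x_2,
\]
and the monomials of $\kappa\,x_1\spr x_2$ lie in $X$, with head and tail $0$, whereas $h(\lc x_1\rc)=t(\lc x_2\rc)=1$. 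So the unbracketed correction does ``corrupt the head/tail bookkeeping at top level'': no amount of tracking along the depth filtration can repair this, because monomial-wise preservation of head and tail is simply false for the $\kappa$-terms whenever $\kappa\neq 0$. Flagging the delicate term is not the same as handling it, and your proof of Item~(\mref{it:mat0}) does not go through. The failure propagates to your argument for Item~(\mref{it:mat2}), which consumes exactly this point: with $\frak x=x_1$, $\frak x'=\lc x_2\rc$, $\frak x''=\lc x_3\rc$, the term $\kappa\,x_2\spr x_3$ of $\frak x'\shpr\frak x''$ does not begin with a bracket, so prefixing $\frak x$ ``by pure concatenation'' does not even produce elements of $\frakX_\infty$; the juxtaposition must be read as merging the adjacent $X$-letters by the product of $A$, i.e., as a $\shpr$-product --- which is precisely what your appeal to Item~(\mref{it:mat0}) was meant to avoid. (Your base case and your derivation of Item~(\mref{it:mat1}) are correct; note also that when $\bre(\frak x)=1$ the head of the product comes from the junction itself, so the breadth case analysis you give for Item~(\mref{it:mat0}) does not always let you ``read off'' the head from an untouched extreme piece.)

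For comparison, the paper's own proof is a two-line remark --- Items~(\mref{it:mat0}) and~(\mref{it:mat1}) ``follow from the definition'' and Item~(\mref{it:mat2}) is referred to \cite[Lemma~4.4.5]{Gub} --- and in fairness that terseness hides the same subtlety: the cited lemma concerns the weight-$\lambda$ Rota--Baxter product, whose weight term $\lambda\lc\ox\shpr\ox'\rc$ is bracketed, so there the head/tail statement is true monomial-wise and an induction of the shape you propose closes verbatim. In the modified setting the statement must either be weakened (asserted only for the monomials other than the $\kappa$-corrections) or restricted to the case $t(\frak x)\neq h(\frak x')$, where it is trivial by Item~(\mref{it:mat1}); Item~(\mref{it:mat2}) itself remains valid once the mixed juxtapositions are interpreted via $\shpr$ (adjacent $X$-letters multiplied in $A$) and can then be proved directly by induction on depth, without invoking monomial-wise head/tail preservation. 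As written, your proofs of Items~(\mref{it:mat0}) and~(\mref{it:mat2}) are valid only for $\kappa=0$.
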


\begin{proof}
Items~(\mref{it:mat0}) and~(\mref{it:mat1}) follow from the definition of $\shpr$.
The proof of Item~(\mref{it:mat2}) is the same as~\cite[Lemma~4.4.5]{Gub}.
\end{proof}

We next define a linear operator
\begin{equation*}
\mop: \FN(A)  \to \FN(A) , \, \,\frakx \mapsto \lc \frak x \rc.
\end{equation*}
\emph{In the rest of the paper}, we will use the infix notation $\lc \frakx\rc$ interchangeably
with $\mop(\frakx)$ for any $\frakx\in \FN(A)$.
Let
$$j_X:X \hookrightarrow \frak X_\infty \hookrightarrow \FN(A)$$
be the natural injection which extends to an algebra injection
$$
j_A: A \to \FN(A).
$$

Now we state our first main result, to be proved in the
next subsection.
\begin{theorem}
Let $A$ be a  $\bf k$-algebra with a $\bf k$-basis $X$ and $\kappa\in \bfk$ be given.
\begin{enumerate}
\item
The pair $(\FN(A),\shpr)$ is an algebra.
\mlabel{it:alg}
\item
The triple $(\FN(A),\shpr,\mop)$ is a modified Rota-Baxter  algebra of weight $\kappa$.
\mlabel{it:MRB}
\item
The triple $(\FN(A),\shpr,\mop$) together with the embedding $j_A$ is the free modified Rota-Baxter  algebra of weight $\kappa$ on the algebra $A$. \mlabel{it:free}
\end{enumerate}
\mlabel{thm:freeao}
\end{theorem}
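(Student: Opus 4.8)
The plan is to treat the three assertions in turn, with the associativity in Part~(\mref{it:alg}) being the heart of the matter, Part~(\mref{it:MRB}) essentially immediate, and Part~(\mref{it:free}) a formal consequence once the algebra structure is established. I begin with Part~(\mref{it:MRB}), since it costs almost nothing: a bracketed word $\lc u\rc$ always has breadth one, so the third case of Eq.~(\mref{eq:shprod}) applies to $\lc u\rc \shpr \lc v\rc$ for \emph{all} $u,v\in\frak X_\infty$, and reading $\frak x=\lc u\rc=\mop(u)$ and $\frak x'=\lc v\rc=\mop(v)$ it states exactly
\[
\mop(u)\shpr \mop(v)=\mop\big(\mop(u)\shpr v\big)+\mop\big(u\shpr \mop(v)\big)+\kappa\,(u\shpr v).
\]
Extending bilinearly and using the linearity of $\mop$ yields the operator identity~(\mref{eq:mrbo}) on all of $\FN(A)$. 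Hence, once $\shpr$ is shown associative with unit $1$, the triple $(\FN(A),\shpr,\mop)$ is a modified Rota-Baxter algebra of weight $\kappa$.

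For Part~(\mref{it:alg}) I would first check that $1$ is a two-sided identity (direct from the definition) and then prove associativity $(\frak x\shpr \frak x')\shpr \frak x''=\frak x\shpr(\frak x'\shpr \frak x'')$ for basis elements by induction on the total depth $n=\dep(\frak x)+\dep(\frak x')+\dep(\frak x'')$ and, for fixed $n$, on the total breadth. The reduction step rests on Lemma~\mref{lem:match}(\mref{it:mat2}): whenever some factor has breadth larger than one, I split off a boundary letter along its standard decomposition~(\mref{eq:st}) and push the splitting through both sides using the two identities of that item, invoking the induction hypothesis at strictly smaller breadth; a short check shows both sides reduce to the same expression. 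This leaves the case $\bre(\frak x)=\bre(\frak x')=\bre(\frak x'')=1$, where each factor lies in $X$ or in $\lc\frak X_\infty\rc$ and the outcome is governed by whether the junctions $t(\frak x)=h(\frak x')$ and $t(\frak x')=h(\frak x'')$ match. When a junction fails to match, the relevant product is a concatenation and Lemma~\mref{lem:match} disposes of the subcase quickly. The one genuinely substantial subcase is when both junctions match, i.e. $\frak x=\lc a\rc$, $\frak x'=\lc b\rc$, $\frak x''=\lc c\rc$ are all bracketed; here I would expand both sides by applying the third line of Eq.~(\mref{eq:shprod}) repeatedly and reorganize the resulting terms, using the induction hypothesis (every intermediate product has strictly smaller depth) together with the operator identity of Part~(\mref{it:MRB}). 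The weight term $\kappa\,(a\shpr b)$ plays the role that $\lambda\mop(ab)$ plays for ordinary Rota-Baxter words, and the verification parallels the associativity proof for free Rota-Baxter algebras in~\cite{Gub}. This all-bracketed case is the main obstacle: a bookkeeping-heavy but mechanical cancellation.

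Finally, for Part~(\mref{it:free}), given a modified Rota-Baxter algebra $(R,P)$ of weight $\kappa$ and an algebra homomorphism $f\colon A\to R$, I would define $\free{f}\colon \FN(A)\to R$ on the basis $\frak X_\infty$ by induction on depth: $\free{f}(1)=1_R$, $\free{f}(\frak x)=f(\frak x)$ for $\frak x\in X$, $\free{f}(\lc \ox\rc)=P\big(\free{f}(\ox)\big)$ on bracketed elements, and $\free{f}(\frak x_1\cdots \frak x_b)=\free{f}(\frak x_1)\cdots \free{f}(\frak x_b)$ along the standard decomposition, extended linearly. By construction $\free{f}\circ\mop=P\circ\free{f}$ and $\free{f}\circ j_A=f$, so it only remains to prove that $\free{f}$ is multiplicative, $\free{f}(\frak x\shpr \frak x')=\free{f}(\frak x)\,\free{f}(\frak x')$. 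This is an induction on depth and breadth that mirrors Part~(\mref{it:alg}): the concatenation and breadth-reduction cases follow from the definition of $\shpr$ and Lemma~\mref{lem:match}, while the bracketed-bracketed case is precisely where the hypothesis that $P$ satisfies~(\mref{eq:mrbo}) in $R$ is used—applying $\free{f}$ to the third line of Eq.~(\mref{eq:shprod}) and then the modified Rota-Baxter identity for $P$ recovers $P(\free{f}(a))P(\free{f}(b))$. Uniqueness is then automatic, since any morphism of modified Rota-Baxter algebras extending $f$ is forced to obey these same inductive formulas; thus $\free{f}$ is the unique such map and $(\FN(A),\shpr,\mop)$ together with $j_A$ is the free modified Rota-Baxter algebra of weight $\kappa$ on $A$.
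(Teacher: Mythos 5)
Your proposal is correct and follows essentially the same route as the paper: reading Part~(b) directly off the third case of Eq.~(\mref{eq:shprod}), proving associativity by an outer induction on total depth with an inner reduction on breadth via Lemma~\mref{lem:match} (the paper's Lemma~\mref{lem:ell}) down to the all-bracketed breadth-one case settled by the nine-term expansion, and establishing Part~(c) by the same depth-inductive definition of $\free{f}$ (forced on brackets by $\free{f}\circ\mop=P\circ\free{f}$ and on standard decompositions by multiplicativity, whence uniqueness), with the bracketed-bracketed junction handled by the modified Rota-Baxter identity in the target. The only difference is one of detail, not substance: you outline the final cancellation and the multiplicativity check where the paper writes them out explicitly.
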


\subsection{The proof of Theorem~\mref{thm:freeao}}
\mlabel{ss:proof}

\begin{proof}
(\mref{it:alg}). It is enough to verify the associativity for basis elements:
\begin{equation}
 (\frak x'\shpr \frak x'')\shpr \frak x''' =\frak x'\shpr(\frak x'' \shpr \frak x'''),\,\text{ for all }  \frak x',\frak x'',\frak x'''\in \frak X_\infty.
\mlabel{eq:assx}
\end{equation}
We carry out the verification by induction on the sum of the depths
$$n:=\dep(\frak x')+\dep(\frak x'')+\dep(\frak x''')\geq 0.$$
If $n=0$, then $$\dep(\frak x') = \dep(\frak x'') = \dep(\frak x''') = 0$$
and so $\frak x',\frak x'',\frak x'''\in X$. In
this case the product $\shpr$ is given by the product in $A$ and so is associative.

Assume that Eq.~(\mref{eq:assx}) holds for $n\leq k$ for any given $k\geq 0$ and consider
$\frak x',\frak x'',\frak x'''\in \frak X_\infty$ with
$$n=\dep(\frak x')+\dep(\frak x'')+\dep(\frak x''')=k+1\geq 1.$$
If $t(\frak x')\neq h(\frak x'')$, then by Lemma~\mref{lem:match},
$$ (\frak x' \shpr \frak x'') \shpr \frak x'''=(\frak x'\frak x'')\shpr \frak x'''
= \frak x' (\frak x'' \shpr \frak x''') =\frak x'\shpr (\frak x''\shpr
\frak x''').$$
A similar argument holds when $t(\frak x'')\neq h(\frak x''')$.
Thus we only need to verify the associativity when
$t(\frak x')=h(\frak x'')$ and $t(\frak x'')=h(\frak x''')$. We next
reduce the proof to the breadths of the words and depart to show a lemma.

\begin{lemma}
If Eq.~(\mref{eq:assx})
holds for all $\frak x', \frak x''$ and $\frak x'''$ in $\frak X_\infty$
of breadth one, then it holds for all $\frak x', \frak x''$ and
$\frak x'''$ in $\frak X_\infty$. \mlabel{lem:ell}
\end{lemma}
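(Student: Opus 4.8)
The plan is to argue by induction on the total breadth $N:=\bre(\frak x')+\bre(\frak x'')+\bre(\frak x''')$, using Lemma~\mref{lem:match} to strip non-matching junctions off the standard decompositions until all three words have breadth one. We may assume none of $\frak x',\frak x'',\frak x'''$ is $1$ (the unit case being immediate from unitality), so that each admits a standard decomposition as in Eq.~(\mref{eq:st}); recall that consecutive factors of such a decomposition alternate between $X$ and $\lc\frak X_\infty\rc$, so that the tail of one factor always differs from the head of the next. The base case $\bre(\frak x')=\bre(\frak x'')=\bre(\frak x''')=1$ is precisely the hypothesis of the lemma, so there is nothing to prove.

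For the inductive step, suppose some breadth exceeds $1$. If $\bre(\frak x')>1$, I would write $\frak x'=\frak x'_1\,\frak u$ with $\frak u:=\frak x'_2\cdots\frak x'_{b'}$, where $t(\frak x'_1)\neq h(\frak u)$. Applying Lemma~\mref{lem:match}(\mref{it:mat2}) twice on the left and once on the right, together with the head/tail invariance of Lemma~\mref{lem:match}(\mref{it:mat0}) to check that the newly formed junctions stay non-matching, both $(\frak x'\shpr\frak x'')\shpr\frak x'''$ and $\frak x'\shpr(\frak x''\shpr\frak x''')$ reduce to $\frak x'_1$ concatenated with an associativity expression in the triple $(\frak u,\frak x'',\frak x''')$, whose total breadth is $N-1$; the induction hypothesis then gives equality. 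The case $\bre(\frak x''')>1$ is handled symmetrically by splitting off the last factor $\frak x'''=\frak w\,\frak v$, which reduces both sides to an associativity expression in $(\frak x',\frak x'',\frak w)$ concatenated with $\frak v$.

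The remaining case, $\bre(\frak x')=\bre(\frak x''')=1$ and $\bre(\frak x'')>1$, is the one I expect to carry the real subtlety, since the middle word is acted upon from both sides. Here I would split $\frak x''=\frak x''_1\,\frak u''$ at its first junction, where $t(\frak x''_1)\neq h(\frak u'')$, and apply both forms of Lemma~\mref{lem:match}(\mref{it:mat2}): on the left one pulls $\frak x'$ across the junction and then $\frak x'''$ across the resulting one, while on the right one first forms $\frak u''\shpr\frak x'''$ and then pulls $\frak x'$ across. Both computations should collapse to the single concatenation $(\frak x'\shpr\frak x''_1)(\frak u''\shpr\frak x''')$, so that associativity holds in this case directly, without recourse to the induction hypothesis.

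The only genuine obstacle is the bookkeeping of heads and tails. Each use of Lemma~\mref{lem:match}(\mref{it:mat2}) is legitimate only at a junction whose left tail differs from its right head, and after forming products such as $\frak x'\shpr\frak x''_1$ or $\frak u\shpr\frak x''$ one must invoke Lemma~\mref{lem:match}(\mref{it:mat0}) to confirm $t(\frak x'\shpr\frak x''_1)=t(\frak x''_1)$ and $h(\frak u\shpr\frak x'')=h(\frak u)$, so that the relevant junctions remain non-matching. Once this is tracked carefully, the three cases above assemble into the desired reduction to breadth one.
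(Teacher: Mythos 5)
Your proposal is correct and follows essentially the same route as the paper's proof: the paper likewise inducts on the total breadth $\bre(\frak x')+\bre(\frak x'')+\bre(\frak x''')$ with the breadth-one case as the base, splits whichever word has breadth $\geq 2$ at a junction $\frak x'=\frak x'_1\frak x'_2$ with $t(\frak x'_1)\neq h(\frak x'_2)$, and uses Lemma~\mref{lem:match} (extended bilinearly, with the head/tail invariance of part~(\mref{it:mat0}) justifying each transfer across a junction) to reduce the outer cases to the induction hypothesis. Your observation that the middle case needs no induction hypothesis, both sides collapsing directly to $(\frak x'\shpr\frak x''_1)(\frak x''_2\shpr\frak x''')$, is exactly how the paper handles it as well.
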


\begin{proof}
We use induction on the sum of breadths
$m:=\bre(\frak x')+ \bre(\frak x'')+\bre(\frak x''')\geq 3$. The case
when $m=3$ is the assumption of the lemma. Assume the associativity
holds for $m\leq j$ for some $j\geq 3$ and
take $\frak x', \frak x'',\frak x'''\in
\frak X_\infty$ with $m = j+1\geq 4.$ So at least one of
$\frak x',\frak x'',\frak x'''$ has breadth greater than or equal to
2.

First assume that $\bre(\frak x')\geq 2$. Then we may write
$$\frak x'=\frak x'_1\frak x'_2,\,\text{ where } \frak x'_1,\, \frak x'_2\in \frak X_\infty\,\text{ and }\, t(\frak x'_1)\neq
h(\frak x'_2).$$
By Lemma~\mref{lem:match}, we obtain
$$
 (\frak x'\shpr \frak x'') \shpr \frak x'''=
((\frak x'_1\frak x'_2)\shpr \frak x'')\shpr \frak x'''
= (\frak x'_1 (\frak x'_2 \shpr \frak x''))\shpr \frak x'''
= \frak x'_1 ((\frak x'_2 \shpr \frak x'') \shpr \frak x'''). $$
Similarly,
$$ \frak x'\shpr (\frak x'' \shpr \frak x''')=
(\frak x'_1\frak x'_2)\shpr (\frak x''\shpr \frak x''')
= \frak x'_1 (\frak x'_2 \shpr (\frak x''\shpr \frak x''')).
$$
Thus
$$ (\frak x'\shpr \frak x'') \shpr \frak x'''=
  \frak x'\shpr (\frak x'' \shpr \frak x''')$$
whenever
$$ (\frak x'_2 \shpr \frak x'') \shpr \frak x'''=
\frak x'_2 \shpr (\frak x''\shpr \frak x'''),$$
which follows from the induction hypothesis.
A similar proof works if $\bre(\frak x''')\geq 2.$

Finally if $\bre(\frak x'')\geq 2$, we may write
$$\frak x''=\frak x''_1\frak x''_2\,\text{ where }\, \frak x''_1,\,\frak x''_2\in \frak X_\infty\,\text{ and }\, t(\frak x''_1)\neq h(\frak x''_2).$$
By Lemma~\mref{lem:match} again, we get
$$
(\frak x' \shpr \frak x'')\shpr \frak x'''=
(\frak x' \shpr (\frak x''_1 \frak x''_2)) \shpr \frak x''' \\
= ((\frak x' \shpr \frak x''_1)\frak x''_2)\shpr \frak x'''
= (\frak x'\shpr \frak x''_1)(\frak x''_2 \shpr \frak x''').
$$
In the same way, we have
$$(\frak x'\shpr \frak x''_1)(\frak x''_2 \shpr \frak x''')
= \frak x'\shpr (\frak x'' \shpr \frak x''').$$
This proves the associativity.

\end{proof}

In summary, the proof of the associativity has been reduced to the
special case when $\frak x',\frak x'',\frak x'''\in \frak X_\infty$ are
chosen so that
\begin{enumerate}
\item
$n= \dep(\frak x')+ \dep(\frak x'') + \dep(\frak x''')=k+1\geq 1$ with the
assumption that the associativity holds when $n\leq k$.
\mlabel{it:sp1}
\item
the elements have breadth one and \mlabel{it:sp2}
\item
$t(\frak x')=h(\frak x'')$ and $t(\frak x'')=h(\frak x''')$.
\mlabel{it:sp3}
\end{enumerate}
By Item~(\mref{it:sp2}), the head and tail of each of the elements are
the same. Therefore by Item~(\mref{it:sp3}), either all the three
elements are in $X$ or they are all in $\lc \frak X_\infty \rc$. If
all of $\frak x',\frak x'',\frak x'''$ are in $X$, then as already
shown, the associativity follows from the associativity in $A$.
So it remains to consider the case when $\frak x',\frak x'',\frak x'''$ are all in $\lc
\frak X_\infty \rc$. Then we may write
$$\frak x'=\lc \ox'\rc, \frak x''=\lc \ox''
\rc, \frak x'''=\lc \ox'''\rc \,\text{ with }\, \ox',\ox'',\ox'''\in
\frak X_\infty.$$
Applying Eq.~(\mref{eq:shprod}) and bilinearity of the
product $\shpr$, we get
\allowdisplaybreaks{
\begin{eqnarray*}
(\frak x'\shpr \frak x'')\shpr \frak x'''
&=& \big( \lc \lc \ox'\rc \shpr
\ox ''\rc +\lc\ox'\shpr \lc\ox''\rc\rc
    +\kappa\ox'\shpr \ox'' \big ) \shpr \lc \ox'''\rc \\
&=& \lc\lc \ox'\rc \shpr \ox''\rc \shpr \lc\ox'''\rc
    + \lc\ox'\shpr \lc \ox''\rc \rc\shpr \lc \ox'''\rc
    +\kappa(\ox'\shpr \ox'') \shpr \lc\ox'''\rc \\
&=&  \lc\lc \ox'\rc\shpr \ox''\rc\shpr \ox''' \rc
    + \lc\big(\lc\ox'\rc \shpr \ox''\big) \shpr \lc\ox'''\rc\rc
   +\kappa\big(\lc\ox'\rc \shpr\ox''\big)\shpr \ox''' \\
&& + \lc\lc\ox'\shpr\lc\ox''\rc\rc \shpr \ox'''\rc
    + \lc\big(\ox'\shpr\lc \ox''\rc\big) \shpr\lc \ox'''\rc\rc
    +\kappa\big(\ox'\shpr \lc \ox''\rc \big) \shpr \ox''' \\
&& +\kappa(\ox'\shpr \ox'') \shpr \lc\ox'''\rc \\
&=&  \lc\lc \ox'\rc\shpr \ox''\rc\shpr \ox''' \rc
    + \lc\big(\lc\ox'\rc \shpr \ox''\big) \shpr \lc\ox'''\rc\rc
    +\kappa\big(\lc\ox'\rc \shpr\ox''\big)\shpr \ox''' \\
&& + \lc\lc\ox'\shpr\lc\ox''\rc\rc \shpr \ox'''\rc\rc
    + \lc \ox'\shpr \lc \lc \ox''\rc \shpr\ox'''\rc\rc+ \lc \ox'\shpr \lc\ox''\shpr\lc\ox'''\rc\rc\\
&& +\kappa\lc \ox'\shpr \ox'' \shpr\ox'''\rc+\kappa\big(\ox'\shpr \lc \ox''\rc \big) \shpr \ox'''
 +\kappa(\ox'\shpr \ox'') \shpr \lc\ox'''\rc.
\end{eqnarray*}}
Similarly we obtain
\allowdisplaybreaks{
\begin{eqnarray*}
\frak x' \shpr \big(\frak x''\shpr \frak x'''\big)
&=&  \lc \ox'\rc \shpr \big(
 \lc \lc\ox ''\rc\shpr \ox'''\rc +\lc\ox''\shpr \lc\ox'''\rc\rc
    +\kappa\ox''\shpr \ox''' \big )  \\
&=&  \lc \ox'\rc \shpr
 \lc \lc\ox ''\rc\shpr \ox'''\rc +\lc \ox'\rc \shpr \lc\ox''\shpr \lc\ox'''\rc\rc
    +\kappa\lc \ox'\rc \shpr \big(\ox''\shpr \ox''' \big )  \\
&=& \lc\lc\ox'\rc\shpr \big(\lc\ox''\rc\shpr \ox'''\big )\rc
    + \lc \ox'\shpr \lc\lc\ox''\rc \shpr \ox'''\rc\rc
   +\kappa\ox'\shpr \big(\lc\ox''\rc\shpr\ox'''\big )\\
&&  + \lc\lc \ox'\rc\shpr \big(\ox''\shpr \lc \ox'''\rc \big) \rc
    + \lc \ox' \shpr \lc \ox'' \shpr \lc \ox'''\rc\rc\rc
    +\kappa \ox'\shpr \big( \ox''\shpr \lc \ox'''\rc \big)\\
&& +\kappa\lc \ox'\rc \shpr \big(\ox''\shpr \ox''' \big )\\
&=& \lc\lc\lc\ox'\rc\shpr \ox''\rc\shpr \ox'''\rc +\lc\lc\ox'\shpr \lc\ox''\rc\rc\shpr \ox'''\rc\\
&&   +\kappa\lc \ox'\shpr \ox'' \shpr\ox'''\rc + \lc \ox'\shpr \lc\lc\ox''\rc \shpr \ox'''\rc\rc
   +\kappa\ox'\shpr \big(\lc\ox''\rc\shpr\ox'''\big )\\
&&  + \lc\lc \ox'\rc\shpr \big(\ox''\shpr \lc \ox'''\rc \big) \rc
    + \lc \ox' \shpr \lc \ox'' \shpr \lc \ox'''\rc\rc\rc
    +\kappa \ox'\shpr \big( \ox''\shpr \lc \ox'''\rc \big)\\
&& +\kappa\lc \ox'\rc \shpr \big(\ox''\shpr \ox''' \big ).
\end{eqnarray*}}
Now by the induction hypothesis, the $i$-th term in the expansion of $(\frak x'\shpr
\frak x'')\shpr \frak x'''$ coincides with the $\sigma(i)$-th term  in
the expansion of $\frak x'\shpr(\frak x'' \shpr \frak x''')$. Here $\sigma\in \Sigma_{9}$ is the permutation given by
\begin{equation*}
\sigma= \left ( \begin{array}{ccccccccccc} 1&2&3&4&5&6&7&8&9\\
    1&6&9&2&4&7&3&5&8\end{array} \right ).
\end{equation*}
This completes the proof of Theorem~\mref{thm:freeao}~(\mref{it:alg}).

(\mref{it:MRB}). The proof follows from the definition
$\mop(\frak x)=\lc \frak x\rc$ and Eq. (\mref{eq:shprod}).

(\mref{it:free}). Let $(M,\ast,P)$ be a modified Rota-Baxter  algebra with multiplication $\ast$ and let $f:A\to M$ be a  $\bfk$-algebra homomorphism. We will construct a
$\bfk$-linear map $\free{f}:\FN(A) \to M$ by defining
$\free{f}(\frak x)$ for $\frak x\in \frak X_\infty$. We achieve this by
defining $\free{f}(\frak x)$ for $\frak x\in \frak X_n,\ n\geq 0$,
inductively on $n$. For $\frak x\in \frak X_0:=X$, define
$\free{f}(\frak x)=f(\frak x).$ Then $j \free{f}=f$ is satisfied. Suppose $\free{f}(\frak x)$ has been
defined for $\frak x\in \frak X_n$ and consider $\frak x$ in
$\frak X_{n+1}$ which is, by definition,
\begin{eqnarray*}
\altx(X,\frak X_{n}) =
    \Big( \bigsqcup_{r\geq 1} (X\lc \frak X_{n}\rc)^r \Big) \bigsqcup
    \Big(\bigsqcup_{r\geq 0} (X\lc \frak X_{n}\rc)^r  X\Big)  \bigsqcup \Big( \bigsqcup_{r\geq 0} \lc \frak X_{n}\rc (X\lc \frak X_{n}\rc)^r \Big)
   \bigsqcup \Big( \bigsqcup_{r\geq 0} \lc \frak X_{n}\rc (X\lc \frak X_{n}\rc)^r X\Big).
\end{eqnarray*}
Let $\frak x$ be in the first union component $\bigsqcup_{r\geq 1}
(X\lc \frak X_{n}\rc)^r$ above. Then
$$\frak x = \prod_{i=1}^r(\frak x_{2i-1} \lc \frak x_{2i} \rc)$$
for $\frak x_{2i-1}\in X$ and $\frak x_{2i}\in \frak X_n$, $1\leq i\leq
r$. By the construction of the multiplication $\shpr$ and the
modified Rota-Baxter  operator $\mop$, we have
$$\frak x= \shpr_{i=1}^r(\frak x_{2i-1} \shpr \lc \frak x_{2i}\rc)
    = \shpr_{i=1}^r(\frak x_{2i-1} \shpr \mop(\frak x_{2i})).$$
Define
\begin{equation}
\free{f}(\frak x) = \ast_{i=1}^r \big(\free{f}(\frak x_{2i-1})
    \ast \mop\big (\free{f}(\frak x_{2i})) \big).
\mlabel{eq:hom}
\end{equation}
where the right hand side is well-defined by the induction
hypothesis. Similarly define $\free{f}(\frak x)$ if $\frak x$ is in
the other union components. For any $\frak x\in \frak X_\infty$, we
have $\mop(\frak x)=\lc \frak x\rc\in \frak X_\infty$, and by the definition of $\free{f}$ in (Eq. (\mref{eq:hom})), we have
\begin{equation}
\free{f}(\lc \frak x \rc)=P(\free{f}(\frak x)). \mlabel{eq:hom1-2}
\end{equation}
So $\free{f}$ commutes with the modified Rota-Baxter  operators. Combining this
equation with Eq.~(\mref{eq:hom}) we see that if
$\frak x=\frak x_1\cdots \frak x_b$ is the standard decomposition of
$\frak x$, then
\begin{equation*}
 \free{f}(\frak x)=\free{f}(\frak x_1)*\cdots * \free{f}(\frak x_b).
\end{equation*}

Note that this is the only possible way to define $\free{f}(\frak x)$
in order for $\free{f}$ to be a modified Rota-Baxter  algebra homomorphism extending $f$.
It remains to prove that the map $\free{f}$ defined in
Eq.~(\mref{eq:hom}) is indeed an algebra homomorphism. For this we
only need to check the multiplicity
\begin{equation}
\free{f} (\frak x \shpr \frak x')=\free{f}(\frak x) \ast
\free{f}(\frak x') \mlabel{eq:hom2}
\end{equation}
for all $\frak x,\frak x'\in \frak X_\infty$. For this we use induction
on the sum of depths $n:=\bre(\frak x)+\bre(\frak x')$. Then $n\geq 0$. When
$n=0$, we have $\frak x,\frak x'\in X$. Then Eq.~(\mref{eq:hom2})
follows from the multiplicity of $f$. Assume the multiplicity holds
for $\frak x,\frak x' \in \frak X_\infty$ with $n\geq k$ and take
$\frak x,\frak x'\in \frak X_\infty$ with $n=k+1$. Let
$\frak x=\frak x_1\cdots \frak x_b$ and
$\frak x'=\frak x'_1\cdots\frak x'_{b'}$ be the standard
decompositions. Since $n=k+1\geq 1$, at least one of $\frak x_b$ and $\frak x'_{b'}$ is in $\lc \frak X_\infty\rc$. Then by Eq.~(\mref{eq:shprod}) we have

\begin{align*}
\free{f}(\frak x_b\shpr \frak x'_1)&= \left \{\begin{array}{ll}
\free{f}(\frak x_b \frak x'_1), & {\rm if\ } \frak x_b\in X, \frak x'_1\in \lc \frak X_\infty\rc,\\
\free{f}(\frak x_b \frak x'_1), & {\rm if\ } \frak x_b\in \lc
\frak X_\infty\rc,
    \frak x'_1\in X,\\
\free{f}\big( \lc \lc \ox_b\rc \shpr \ox'_1\rc +\lc \ox_b \shpr \lc
\ox'_1\rc \rc+\kappa \ox_b \shpr \ox'_1 \big), & {\rm if\ }
\frak x_b=\lc \ox_b\rc, \frak x'_1=\lc \ox'_1\rc \in \lc \frak X_\infty
\rc.
\end{array} \right .
\end{align*}
In the first two cases, the right hand side is
$\free{f}(\frak x_b)*\free{f}(\frak x'_1)$ by the definition of
$\free{f}$. In the third case, applying Eq.~(\mref{eq:hom1-2}),
the induction hypothesis and the modified Rota-Baxter  relation of the operator $P$ on $M$, we have
\begin{eqnarray*}
&&\free{f}\big( \lc \lc \ox_b\rc \shpr \ox'_1\rc
    +\lc \ox_b \shpr \lc \ox'_1\rc \rc
+\kappa \ox_b \shpr \ox'_1 \big)\\
&=&\free{f}(\lc \lc \ox_b\rc \shpr \ox'_1\rc) + \free{f}(\lc \ox_b
\shpr \lc \ox'_1\rc \rc)
+\kappa\free{f}( \ox_b \shpr \ox'_1 )\\
&=&P(\free{f}(\lc \ox_b\rc \shpr \ox'_1)) + P(\free{f}(\ox_b \shpr
\lc \ox'_1\rc ))
+\kappa\free{f}(\ox_b \shpr \ox'_1 )\\
&=&P(\free{f}(\lc \ox_b\rc)*\free{f}(\ox'_1)) + P(\free{f}(\ox_b)
*\free{f}( \lc \ox'_1\rc ))
+\kappa\free{f}(\ox_b) * \free{f}(\ox'_1)\\
&=&P(P(\free{f}(\ox_b))*\free{f}(\ox'_1)) + P(\free{f}(\ox_b)
*P(\free{f}(\ox'_1)))
+\kappa(\free{f}(\ox_b) * \free{f}(\ox'_1))\\
&=& P(\free{f}(\ox_b))*P(\free{f}(\ox'_1))\\
&=& \free{f}(\lc \ox_b\rc) * \free{f}(\lc\ox'_1\rc)\\
&=& \free{f} (\frak x_b) *\free{f}(\frak x'_1).
\end{eqnarray*}
Therefore $\free{f}(\frak x_b\shpr
\frak x'_1)=\free{f}(\frak x_b)*\free{f}(\frak x'_1)$. Then
\begin{eqnarray*}
\free{f}(\frak x\shpr \frak x')&=&
\free{f}\big(\frak x_1\cdots\frak x_{b-1}(\frak x_b\shpr
\frak x'_1)\frak x'_2\cdots
    \frak x'_{b'}\big) \\
&=& \free{f}(\frak x_1)*\cdots *\free{f}(\frak x_{b-1})*
\free{f}(\frak x_b\shpr \frak x'_1)*\free{f}(\frak x'_2)\cdots
    \free{f}(\frak x'_{b'})\\
&=& \free{f}(\frak x_1)*\cdots *\free{f}(\frak x_{b-1})*
\free{f}(\frak x_b)* \free{f} (\frak x'_1)*\free{f}(\frak x'_2)\cdots
    \free{f}(\frak x'_{b'})\\
&=& \free{f}(\frak x)*\free{f}(\frak x'),
\end{eqnarray*}
as required.

This completes the proof of Theorem~\mref{thm:freeao}
\end{proof}

\section{The Hopf algebra structure on free modified Rota-Baxter algebras}
\mlabel{sec:hopf}

In this section, starting with the assumption that $A$ is a bialgebra with its coproduct $\Delta_A$ and its counit $\vep_A$, we provide a bialgebraic and then a Hopf algebraic structure on the free modified Rota-Baxter algebras $F_\kappa(A)$ obtained in Section~\mref{sec:free}, when $\kappa=-\lambda^2$. It would be interesting to see how to extend this construction to other weights $\kappa$. For Hopf algebra structures on free Rota-Baxter algebras, see~\cite{GGZ,ZtGG} for Hopf algebra structures on free Rota-Baxter algebras.

\subsection{The bialgebraic structure}
\mlabel{ss:coass}
We now build on results from previous subsections to obtain a bialgebra structure on $\FNN(A)$.
We first record some lemmas for a preparation.

\begin{lemma}
Let $\lambda$ be a given element of $\bfk$.
\begin{enumerate}
\item The linear map $-\lambda\id :\bfk \rightarrow \bfk$ is a modified Rota-Baxter operator of weight $-\lambda^2$ on $\bfk$. \mlabel{it:mrbl1}

\item  There exists a unique modified Rota-Baxter algebra morphism $\vep_\mo:\FNN(A)\rightarrow \bfk$ such that
\begin{align}
\vep_\mo \circ j_A =\vep_A\,\text{ and }\, \vep_\mo\circ P_A=-\lambda\id \circ \vep_\mo.\mlabel{eq:countmor}
\end{align}  \mlabel{it:mrbl2}
\end{enumerate}
\mlabel{lem:mrbl}
\end{lemma}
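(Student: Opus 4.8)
The plan is to treat the two items separately: Item~(\mref{it:mrbl1}) by a direct computation, and Item~(\mref{it:mrbl2}) by invoking the universal property from Theorem~\mref{thm:freeao}, using Item~(\mref{it:mrbl1}) to supply the target modified Rota-Baxter algebra.

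For Item~(\mref{it:mrbl1}), I would simply substitute $P=-\lambda\id$ and $\kappa=-\lambda^2$ into the defining identity~(\mref{eq:mrbo}) and evaluate both sides on arbitrary $u,v\in\bfk$. Since $\bfk$ is commutative unitary, the left-hand side $P(u)P(v)$ equals $\lambda^2 uv$, while on the right-hand side the two bracketed terms $P(uP(v))$ and $P(P(u)v)$ each equal $\lambda^2 uv$ and the weight term $\kappa uv$ equals $-\lambda^2 uv$; these sum to $\lambda^2 uv$, matching the left-hand side. This is a routine check with no obstacle.

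For Item~(\mref{it:mrbl2}), I would first note that, by Item~(\mref{it:mrbl1}), the pair $(\bfk,-\lambda\id)$ is a modified Rota-Baxter algebra of weight $-\lambda^2$. Because $A$ is a bialgebra, its counit $\vep_A:A\to\bfk$ is an algebra homomorphism. The freeness statement Theorem~\mref{thm:freeao}~(\mref{it:free}) then furnishes a \emph{unique} morphism of modified Rota-Baxter algebras $\vep_\mo:=\free{\vep_A}:\FNN(A)\to\bfk$ with $\vep_\mo\circ j_A=\vep_A$. Being a morphism of modified Rota-Baxter algebras, $\vep_\mo$ commutes with the two operators, which is exactly the remaining identity $\vep_\mo\circ P_A=-\lambda\id\circ\vep_\mo$; and the uniqueness of $\vep_\mo$ is inherited verbatim from the uniqueness clause of the universal property.

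The only thing requiring attention is to recognize that the two conditions in~(\mref{eq:countmor}) together assert precisely that $\vep_\mo$ is a morphism of modified Rota-Baxter algebras lifting $\vep_A$ through $j_A$. Once this reformulation is made, there is no genuine obstacle: existence and uniqueness are delivered directly by Theorem~\mref{thm:freeao}~(\mref{it:free}), the substantive content being only the verification in Item~(\mref{it:mrbl1}) that makes $(\bfk,-\lambda\id)$ an admissible target.
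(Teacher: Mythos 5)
Your proposal is correct and takes essentially the same approach as the paper: Item~(\mref{it:mrbl1}) is the same direct substitution of $P=-\lambda\id$ into Eq.~(\mref{eq:mrbo}), and Item~(\mref{it:mrbl2}) is obtained, exactly as in the paper, by feeding the modified Rota-Baxter algebra $(\bfk,-\lambda\id)$ and the algebra homomorphism $\vep_A$ into the universal property of Theorem~\mref{thm:freeao}~(\mref{it:free}). Your added observation that the two conditions in Eq.~(\mref{eq:countmor}) jointly say precisely that $\vep_\mo$ is a modified Rota-Baxter algebra morphism lifting $\vep_A$ through $j_A$ merely makes explicit the step the paper compresses into ``the remainder follows.''
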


\begin{proof}
(\mref{it:mrbl1}) It follows from
\begin{align*}
(-\lambda\id)(a) (-\lambda\id)(b) =&\ \lambda^2 a b= \lambda^2 a b + \lambda^2 a b - \lambda^2 a b \\
=&\ (-\lambda\id)( a(-\lambda\id)(b)) +  (-\lambda\id)( (-\lambda\id)(a)b) - \lambda^2 ab.
\end{align*}

(\mref{it:mrbl2}) By Item~(\mref{it:mrbl1}), $(\bfk, -\lambda\id)$ is a modified Rota-Baxter algebra of weight $-\lambda^2$.
Then the remainder follows from Theorem~\mref{thm:freeao}~(\mref{it:free}).
\end{proof}

Note that $P_A$ is a modified Rota-Baxter operator on $\FNN(A)$;however $P_A \ot P_A$ is not a modified Rota-Baxter operator on $\FNN(A) \ot \FNN(A)$.
The following result constructs a modified Rota-Baxter operator on $\FNN(A) \ot \FNN(A)$.

\begin{lemma}\mlabel{lem:RBO1}
Let $\lambda$ be a given element of $\bfk$. Define the linear map
$$ \otp:\FNN(A)\ot \FNN(A)\rightarrow \FNN(A)\ot \FNN(A)$$ by taking
\begin{align}
\otp(\frak x \ot \frak x') :=(P_A(\frak x)+\lambda \frak x)\ot \vep_\mo(\frak x')1+\frak x\ot P_A(\frak x')\, \text{ for }\, \frak x, \frak x'\in \FNN(A).
\mlabel{eq:mRB}
\end{align}
Then $\otp$ is a modified Rota-Baxter operator of weight $-\lambda^2$ on $\FNN(A) \ot \FNN(A)$.
\end{lemma}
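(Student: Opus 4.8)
The plan is to verify the defining identity Eq.~(\mref{eq:mrbo}) for $\otp$ at weight $\kappa=-\lambda^2$, namely
\[
\otp(u)\,\otp(v)=\otp\big(u\,\otp(v)\big)+\otp\big(\otp(u)\,v\big)-\lambda^2\,uv,
\]
and since $\otp$ is linear and the multiplication on $\FNN(A)\ot\FNN(A)$ is the componentwise product $(\frak a\ot\frak b)(\frak c\ot\frak d)=(\frak a\shpr\frak c)\ot(\frak b\shpr\frak d)$, it suffices to treat simple tensors $u=\frak x\ot\frak y$ and $v=\frak x'\ot\frak y'$ with $\frak x,\frak y,\frak x',\frak y'\in\FNN(A)$. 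Throughout I will use three facts established earlier: that $\mop$ is a modified Rota-Baxter operator of weight $-\lambda^2$ on $\FNN(A)$ (Theorem~\mref{thm:freeao}~(\mref{it:MRB})); that $\vep_\mo$ is an algebra homomorphism; and that $\vep_\mo\circ\mop=-\lambda\,\vep_\mo$ (Lemma~\mref{lem:mrbl}~(\mref{it:mrbl2})). It is convenient to abbreviate $S:=\mop+\lambda\,\id$, so that $\otp(\frak x\ot\frak y)=S(\frak x)\ot\vep_\mo(\frak y)1+\frak x\ot\mop(\frak y)$.

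First I would expand the left-hand side $\otp(u)\,\otp(v)$ via the componentwise product; this yields four terms, whose second tensor slots are respectively $\vep_\mo(\frak y)\vep_\mo(\frak y')1$, $\mop(\frak y)$, $\mop(\frak y')$, and $\mop(\frak y)\shpr\mop(\frak y')$. Next I would expand $u\,\otp(v)$ and $\otp(u)\,v$, apply $\otp$ to each resulting simple tensor, and simplify using the multiplicativity of $\vep_\mo$ together with $\vep_\mo\circ\mop=-\lambda\,\vep_\mo$; the latter relation is exactly what produces the scalar factors $-\lambda\,\vep_\mo(\frak y)\vep_\mo(\frak y')$ that appear inside $\otp\big(u\,\otp(v)\big)$ and $\otp\big(\otp(u)\,v\big)$.

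The comparison then splits into three independent families of terms. The two ``mixed'' terms with second slots $\mop(\frak y)$ and $\mop(\frak y')$ match directly between the two sides. The terms carrying first slot $\frak x\shpr\frak x'$ reduce, in the second slot, to
\[
\mop(\frak y)\shpr\mop(\frak y')=\mop\big(\frak y\shpr\mop(\frak y')\big)+\mop\big(\mop(\frak y)\shpr\frak y'\big)-\lambda^2\,\frak y\shpr\frak y',
\]
which is precisely the weight $-\lambda^2$ equation for $\mop$, so they match. The remaining terms all carry the scalar coefficient $\vep_\mo(\frak y)\vep_\mo(\frak y')$ and second slot $1$, and matching them amounts to the auxiliary identity
\[
S(\frak x)\shpr S(\frak x')=S\big(\frak x\shpr S(\frak x')\big)+S\big(S(\frak x)\shpr\frak x'\big)-2\lambda\,S(\frak x\shpr\frak x'),
\]
which I would check by substituting $S=\mop+\lambda\,\id$ and invoking the modified Rota-Baxter equation for $\mop$: after expansion the $\lambda\,\mop(\cdots)$ and $\lambda^2(\cdots)$ contributions cancel, and both sides collapse to the common expression $\mop(\frak x\shpr\mop(\frak x'))+\mop(\mop(\frak x)\shpr\frak x')+\lambda\,\mop(\frak x)\shpr\frak x'+\lambda\,\frak x\shpr\mop(\frak x')$.

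I expect the main obstacle to be purely organizational: each side expands into a handful of monomial terms (four on the left, nine on the right), so the real effort lies in keeping the bookkeeping straight and verifying that they pair off correctly. The conceptual heart is the observation that the counit relation $\vep_\mo\circ\mop=-\lambda\,\vep_\mo$ supplies exactly the coefficient $-2\lambda$ needed for the auxiliary $S$-identity to close, and that this $S$-identity is itself a direct consequence of the weight $-\lambda^2$ equation for $\mop$; once these two points are in place, the term-by-term matching is forced.
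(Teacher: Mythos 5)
Your proposal is correct: it verifies exactly the identity the paper verifies, on simple tensors, using the same two ingredients (the weight $-\lambda^2$ identity for $\mop$ from Theorem~\mref{thm:freeao}~(\mref{it:MRB}) and the relation $\vep_\mo\circ\mop=-\lambda\,\vep_\mo$ from Eq.~(\mref{eq:countmor})), and your auxiliary $S$-identity checks out. The difference is organizational but genuine: the paper expands $(\mop(\frakx)+\lambda\frakx)$ fully on both sides and matches roughly a dozen monomials across a multi-page computation, invoking Eq.~(\mref{eq:countmor}) midstream to convert $\vep_\mo(\mop(\frakx'_i))$ into $-\lambda\,\vep_\mo(\frakx'_i)$, whereas you keep $S:=\mop+\lambda\,\id$ intact, so the comparison factors into three independent families (mixed terms, the second-slot MRB identity for $\mop$, and the first-slot identity $S(\frakx)\shpr S(\frakx')=S(\frakx\shpr S(\frakx'))+S(S(\frakx)\shpr\frakx')-2\lambda\,S(\frakx\shpr\frakx')$). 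That last identity is worth naming: it says precisely that $S=\mop+\lambda\,\id$ is a Rota--Baxter operator of weight $-2\lambda$, which is the standard correspondence between modified Rota--Baxter operators of weight $-\lambda^2$ and ordinary Rota--Baxter operators of weight $-2\lambda$; your proof in effect explains why the coefficient $-2\lambda$ produced by $\vep_\mo\circ\mop=-\lambda\,\vep_\mo$ is exactly what is needed, a conceptual point the paper's brute-force expansion leaves implicit. The only cost of your route is that the auxiliary $S$-identity must be stated and checked separately, but that check is three lines; what it buys is a shorter, better-structured verification that would also adapt immediately to the three-fold operator $\ottp$ of Lemma~\mref{lem:RBO2}, which the paper dispatches with ``a similar argument.''
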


\begin{proof}
Let $\frak x_1, \frak x_2, \frak x'_1, \frak x'_2\in \FNN(A)$. On the one hand,
\begin{align*}
&\ \otp(\frak x_1 \ot \frak x'_1)\diamond' \otp(\frak x_2 \ot \frak x'_2)\\
=&\ \Big((P_A(\frak x_1)+\lambda \frak x_1)\ot \vep_\mo(\frak x'_1)1+\frak x_1\ot P_A(\frak x'_1)\Big)\diamond' \Big((P_A(\frak x_2)+\lambda \frak x_2)\ot \vep_\mo(\frak x'_2)1+\frak x_2\ot P_A(\frak x'_2)\Big)\\
=&\ \Big(\big(P_A(\frak x_1)+\lambda \frak x_1\big)\diamond \big(P_A(\frak x_2)+\lambda \frak x_2\big)\Big)\ot \vep_\mo(\frak x'_1)\vep_\mo(\frak x'_2)1+\Big(\big(P_A(\frak x_1)+\lambda \frak x_1\big)\diamond \frak x_2\Big)\ot \vep_\mo(\frak x'_1)P_A(\frak x'_2)\\
&\ + \Big(\frak x_1 \diamond\big(P_A(\frak x_2)+\lambda \frak x_2\big)\Big)\ot P_A(\frak x'_1)\vep_\mo(\frak x'_2)+(\frak x_1 \diamond \frak x_2)\ot \Big(P_A(\frak x'_1)\diamond P_A(\frak x'_2)\Big)\\
=&\ \Big(P_A(\frak x_1) \diamond P_A(\frak x_2)+\lambda P_A(\frak x_1)\diamond \frak x_2+
 \lambda \frak x_1 \diamond  P_A(\frak x_2)+\lambda^2 \frak x_1\diamond \frak x_2\Big)\ot \vep_\mo(\frak x'_1)\vep_\mo(\frak x'_2)1\\
&\ +\Big(P_A(\frak x_1)\diamond \frak x_2\Big) \ot \vep_\mo(\frak x'_1)P_A(\frak x'_2)+\lambda( \frak x_1\diamond \frak x_2) \ot \vep_\mo(\frak x'_1)P_A(\frak x'_2)+ \Big(\frak x_1 \diamond P_A(\frak x_2) \Big)\ot P_A(\frak x'_1)\vep_\mo(\frak x'_2)\\
&\ +\lambda \big(\frak x_1 \diamond \frak x_2  \big)\ot P_A(\frak x'_1)\vep_\mo(\frak x'_2)+(\frak x_1 \diamond \frak x_2) \ot \Big(P_A(\frak x'_1)\diamond P_A(\frak x'_2)\Big) \\
=&\ \bigg(P_A(\frak x_1 \diamond P_A(\frak x_2)) + P_A( P_A(\frak x_1)\diamond \frak x_2)+\lambda P_A(\frak x_1)\diamond \frak x_2+
 \lambda \frak x_1 \diamond  P_A(\frak x_2)\bigg)\ot \vep_\mo(\frak x'_1)\vep_\mo(\frak x'_2)1\\
&\ +\big(P_A(\frak x_1)\diamond \frak x_2\big) \ot \vep_\mo(\frak x'_1)P_A(\frak x'_2)+\lambda( \frak x_1\diamond \frak x_2) \ot \vep_\mo(\frak x'_1)P_A(\frak x'_2)+ \big(\frak x_1 \diamond P_A(\frak x_2) \big)\ot P_A(\frak x'_1)\vep_\mo(\frak x'_2)\\
&\ +\lambda \big(\frak x_1 \diamond \frak x_2  \big)\ot P_A(\frak x'_1)\vep_\mo(\frak x'_2)+(\frak x_1 \diamond \frak x_2) \ot \bigg(P_A(\frak x_1 \diamond P_A(\frak x_2)) + P_A( P_A(\frak x_1)\diamond \frak x_2)-\lambda^2 \frak x'_1\diamond \frak x'_2\bigg).\\
& \hspace{9cm}  \text{(by Theorem~(\ref{thm:freeao})~(\mref{it:MRB}))}
\end{align*}
On the other hand,
\begin{align*}
&\ \otp\Big((\frak x_1 \ot \frak x'_1)\diamond' \otp(\frak x_2 \ot \frak x'_2)\Big)+\otp\Big(\otp(\frak x_1 \ot \frak x'_1)\diamond' (\frak x_2 \ot \frak x'_2)\Big)-\lambda^2\Big((\frak x_1 \ot \frak x'_1)\diamond' (\frak x_2 \ot \frak x'_2)\Big)\\
=&\ \otp\bigg((\frak x_1 \ot \frak x'_1) \diamond' \Big((P_A(\frak x_2)+\lambda \frak x_2)\ot \vep_\mo(\frak x'_2)1+\frak x_2 \ot P_A(\frak x'_2) \Big)\bigg)\\
&\ +\otp\bigg( \Big((P_A(\frak x_1)+\lambda \frak x_1 )\ot \vep_\mo(\frak x'_1 )1+\frak x_1 \ot P_A(\frak x'_1) \Big) \diamond'  (\frak x_2 \ot \frak x'_2)  \bigg)- \lambda^2 (\frak x_1 \diamond \frak x_2) \ot (\frak x'_1 \diamond \frak x'_2)\\
=&\ \otp\bigg( \big(\frak x_1 \diamond P_A(\frak x_2)\big) \ot \frak x'_1  \vep_\mo (\frak x'_2) +\lambda (\frak x_1 \diamond \frak x_2)\ot \frak x'_1  \vep_\mo (\frak x'_2)+(\frak x_1 \diamond \frak x_2)\ot \big(\frak x'_1 \diamond P_A(\frak x'_2)\big) \bigg)\\
&\ + \otp\bigg(  \big(P_A(\frak x_1) \diamond \frak x_2\big) \ot \vep_\mo (\frak x'_1)\frak x'_2 +\lambda (\frak x_1 \diamond \frak x_2)\ot \vep_\mo (\frak x'_1) \frak x'_2+(\frak x_1 \diamond \frak x_2)\ot \big(P_A(\frak x'_1) \diamond \frak x'_2\big) \bigg)\\
&\ - \lambda^2 (\frak x_1 \diamond \frak x_2) \ot (\frak x'_1 \diamond \frak x'_2)\\
=&\ \bigg(P_A\big(\frak x_1 \diamond P_A(\frak x_2)\big)+\lambda\frak x_1 \diamond P_A(\frak x_2) \bigg) \ot \vep_\mo \big(\frak x'_1 \vep_\mo(\frak x'_2)\big)1+\big(\frak x_1 \diamond P_A(\frak x_2)\big) \ot P_A\big(\frak x'_1 \vep_\mo(\frak x'_2)\big)\\
&\ +\lambda \bigg( P_A(\frak x_1 \diamond \frak x_2 ) +\lambda \frak x_1 \diamond \frak x_2 \bigg)\ot \vep_\mo \big(\frak x'_1 \vep_\mo(\frak x'_2)\big)1+\lambda (\frak x_1 \diamond \frak x_2 ) \ot P_A\big(\frak x'_1 \vep_\mo(\frak x'_2)\big)\\
&\ + \bigg(P_A(\frak x_1 \diamond \frak x_2 )+\lambda \frak x_1 \diamond \frak x_2 \bigg)\ot \vep_\mo \big(\frak x'_1 P_A(\frak x'_2)\big)1+(\frak x_1 \diamond \frak x_2) \ot P_A \big(\frak x'_1 P_A(\frak x'_2)\big)\\
&\ + \bigg(P_A\big(P_A(\frak x_1) \diamond\frak x_2\big)+\lambda P_A(\frak x_1) \diamond\frak x_2 \bigg) \ot \vep_\mo \big(\vep_\mo(\frak x'_1)\frak x'_2\big)1+\big(P_A(\frak x_1) \diamond\frak x_2\big) \ot P_A\big(\vep_\mo(\frak x'_1)\frak x'_2\big)\\
&\ +\lambda \bigg( P_A(\frak x_1 \diamond \frak x_2 ) +\lambda \frak x_1 \diamond \frak x_2 \bigg)\ot \vep_\mo \big(\vep_\mo(\frak x'_1)\frak x'_2\big)1+\lambda (\frak x_1 \diamond \frak x_2 ) \ot P_A\big(\vep_\mo(\frak x'_1 )\frak x'_2\big)\\
&\ + \bigg(P_A(\frak x_1 \diamond \frak x_2 )+\lambda \frak x_1 \diamond \frak x_2 \bigg)\ot \vep_\mo \big(P_A(\frak x'_1)\frak x'_2\big)+(\frak x_1 \diamond \frak x_2) \ot P_A \big(P_A(\frak x'_1)\frak x'_2\big)- \lambda^2 (\frak x_1 \diamond \frak x_2) \ot (\frak x'_1 \diamond \frak x'_2)\\
=&\ \bigg(P_A\big(\frak x_1 \diamond P_A(\frak x_2)\big)+\lambda\frak x_1 \diamond P_A(\frak x_2) \bigg) \ot \vep_\mo (\frak x'_1) \vep_\mo(\frak x'_2)1+\big(\frak x_1 \diamond P_A(\frak x_2)\big) \ot P_A(\frak x'_1) \vep_\mo(\frak x'_2)\\
&\ +\lambda \bigg( P_A(\frak x_1 \diamond \frak x_2 ) +\lambda \frak x_1 \diamond \frak x_2 \bigg)\ot \vep_\mo (\frak x'_1)  \vep_\mo(\frak x'_2)1+\lambda (\frak x_1 \diamond \frak x_2 ) \ot P_A(\frak x'_1) \vep_\mo(\frak x'_2)\\
&\ + \bigg(P_A(\frak x_1 \diamond \frak x_2 )+\lambda \frak x_1 \diamond \frak x_2 \bigg)\ot \vep_\mo (\frak x'_1) \vep_\mo (P_A(\frak x'_2))1+(\frak x_1 \diamond \frak x_2) \ot P_A \big(\frak x'_1 P_A(\frak x'_2)\big)\\
&\ + \bigg(P_A\big(P_A(\frak x_1) \diamond\frak x_2\big)+\lambda P_A(\frak x_1) \diamond\frak x_2 \bigg) \ot \vep_\mo(\frak x'_1) \vep_\mo (\frak x'_2)1+\big(P_A(\frak x_1) \diamond\frak x_2\big) \ot \vep_\mo(\frak x'_1)P_A\big(\frak x'_2\big)\\
&\ +\lambda \bigg( P_A(\frak x_1 \diamond \frak x_2 ) +\lambda \frak x_1 \diamond \frak x_2 \bigg)\ot \vep_\mo(\frak x'_1)\vep_\mo(\frak x'_2)1+\lambda (\frak x_1 \diamond \frak x_2 ) \ot \vep_\mo(\frak x'_1 )P_A\big(\frak x'_2\big)\\
&\ + \bigg(P_A(\frak x_1 \diamond \frak x_2 )+\lambda \frak x_1 \diamond \frak x_2 \bigg)\ot \vep_\mo (P_A(\frak x'_1)) \vep_\mo(\frak x'_2)1+(\frak x_1 \diamond \frak x_2) \ot P_A \big(P_A(\frak x'_1)\frak x'_2\big)\\
&- \lambda^2 (\frak x_1 \diamond \frak x_2) \ot (\frak x'_1 \diamond \frak x'_2)
\hspace{1cm}  \text{(by $\vep_\mo$ is $\bfk$-linear and $\vep_\mo$ is a homomorphism)}\\
=&\ \bigg(P_A\big(\frak x_1 \diamond P_A(\frak x_2)\big)+\lambda\frak x_1 \diamond P_A(\frak x_2) \bigg) \ot \vep_\mo (\frak x'_1) \vep_\mo(\frak x'_2)1+\big(\frak x_1 \diamond P_A(\frak x_2)\big) \ot P_A(\frak x'_1) \vep_\mo(\frak x'_2)\\
&\ +\lambda \bigg( P_A(\frak x_1 \diamond \frak x_2 ) +\lambda \frak x_1 \diamond \frak x_2 \bigg)\ot \vep_\mo (\frak x'_1)  \vep_\mo(\frak x'_2)1+\lambda (\frak x_1 \diamond \frak x_2 ) \ot P_A(\frak x'_1) \vep_\mo(\frak x'_2)\\
&\ -\lambda \bigg(P_A(\frak x_1 \diamond \frak x_2 )+\lambda \frak x_1 \diamond \frak x_2 \bigg)\ot \vep_\mo (\frak x'_1) \vep_\mo (\frak x'_2)1+(\frak x_1 \diamond \frak x_2) \ot P_A \big(\frak x'_1 P_A(\frak x'_2)\big)\\
&\ + \bigg(P_A\big(P_A(\frak x_1) \diamond\frak x_2\big)+\lambda P_A(\frak x_1) \diamond\frak x_2 \bigg) \ot \vep_\mo(\frak x'_1) \vep_\mo (\frak x'_2)1+\big(P_A(\frak x_1) \diamond\frak x_2\big) \ot \vep_\mo(\frak x'_1)P_A\big(\frak x'_2\big)\\
&\ +\lambda \bigg( P_A(\frak x_1 \diamond \frak x_2 ) +\lambda \frak x_1 \diamond \frak x_2 \bigg)\ot \vep_\mo(\frak x'_1)\vep_\mo(\frak x'_2)1+\lambda (\frak x_1 \diamond \frak x_2 ) \ot \vep_\mo(\frak x'_1 )P_A\big(\frak x'_2\big)\\
&\ -\lambda \bigg(P_A(\frak x_1 \diamond \frak x_2 )+\lambda \frak x_1 \diamond \frak x_2 \bigg)\ot \vep_\mo (\frak x'_1) \vep_\mo(\frak x'_2)1+(\frak x_1 \diamond \frak x_2) \ot P_A \big(P_A(\frak x'_1)\frak x'_2\big) \\
&- \lambda^2 (\frak x_1 \diamond \frak x_2) \ot (\frak x'_1 \diamond \frak x'_2)
\hspace{1cm}  \text{(Using Eq.~(\mref{eq:countmor}) in the fifth and eleventh terms)}\\
=&\ \bigg(P_A\big(\frak x_1 \diamond P_A(\frak x_2)\big)+\lambda\frak x_1 \diamond P_A(\frak x_2)+P_A\big(P_A(\frak x_1) \diamond\frak x_2\big)+\lambda P_A(\frak x_1) \diamond\frak x_2  \bigg) \ot \vep_\mo (\frak x'_1) \vep_\mo(\frak x'_2)1\\
&\ +\big(\frak x_1 \diamond P_A(\frak x_2)\big) \ot P_A(\frak x'_1) \vep_\mo(\frak x'_2)
 +\lambda (\frak x_1 \diamond \frak x_2 ) \ot P_A(\frak x'_1) \vep_\mo(\frak x'_2)
 +(\frak x_1 \diamond \frak x_2) \ot P_A \big(\frak x'_1 P_A(\frak x'_2)\big)\\
&\ +\big(P_A(\frak x_1) \diamond\frak x_2\big) \ot \vep_\mo(\frak x'_1)P_A\big(\frak x'_2\big)
 +\lambda (\frak x_1 \diamond \frak x_2 ) \ot \vep_\mo(\frak x'_1 )P_A\big(\frak x'_2\big)
 +(\frak x_1 \diamond \frak x_2) \ot P_A \big(P_A(\frak x'_1)\frak x'_2\big)\\
&\ - \lambda^2 (\frak x_1 \diamond \frak x_2) \ot (\frak x'_1 \diamond \frak x'_2).
\end{align*}
This completes the proof.
\end{proof}

With a similar argument, we can obtain

\begin{lemma}\mlabel{lem:RBO2}
Let $\lambda$ be a given element of $\bfk$. Define the linear map
$$\ottp:\FNN(A)\ot \FNN(A) \ot \FNN(A) \rightarrow \FNN(A)\ot \FNN(A) \ot \FNN(A)$$ by taking
\begin{equation}
\begin{aligned}
\ottp(\frak x \ot \frak x' \ot \frak x'') :=&(P_A(\frak x)+\lambda \frak x)\ot \vep_\mo(\frak x')1\ot \vep_\mo(\frak x'')1+\frak x\ot (P_A(\frak x')+\lambda \frak x') \ot \vep_\mo(\frak x'')1 \\
&+\frak x \ot \frak x' \ot P_A(\frak x'')\,
\text{ for }\, \frak x, \frak x'\in \FNN(A).
\mlabel{eq:mRB1}
\end{aligned}
\end{equation}
Then $\ottp$ is a modified Rota-Baxter operator of weight $-\lambda^2$ on $\FNN(A)\ot \FNN(A) \ot \FNN(A)$.
\end{lemma}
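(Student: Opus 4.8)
The plan is to avoid repeating the lengthy verification of Lemma~\mref{lem:RBO1} by recognizing $\ottp$ as the result of iterating that same construction once more. Write $R:=\FNN(A)$ and identify $R\ot R\ot R$ with $(R\ot R)\ot R$ as tensor-product algebras. The first thing I would do is regroup the defining formula for $\ottp$ as
\[
\ottp\big((\frak x\ot \frak x')\ot \frak x''\big)
= \big(\otp(\frak x\ot \frak x')+\lambda(\frak x\ot \frak x')\big)\ot \vep_\mo(\frak x'')1 + (\frak x\ot \frak x')\ot \mop(\frak x''),
\]
which holds because $\otp(\frak x\ot \frak x')+\lambda(\frak x\ot \frak x')=(\mop(\frak x)+\lambda\frak x)\ot \vep_\mo(\frak x')1+\frak x\ot(\mop(\frak x')+\lambda\frak x')$. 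In this form $\ottp$ has exactly the shape of the operator $\otp$ in Lemma~\mref{lem:RBO1}, but with the first tensor factor $R$ replaced by the pair $(R\ot R,\otp)$ and the second factor kept as $(R,\mop)$, the counit $\vep_\mo$ being used on that second factor.

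This suggests proving a version of Lemma~\mref{lem:RBO1} in which the two tensor factors are allowed to be different algebras. Concretely, I would record the statement that for any two modified Rota-Baxter algebras $(S,P_S)$ and $(T,P_T)$ of weight $-\lambda^2$ and any modified Rota-Baxter algebra morphism $\vep_T:T\to(\bfk,-\lambda\id)$, the map $w\ot z\mapsto (P_S(w)+\lambda w)\ot \vep_T(z)1_T+w\ot P_T(z)$ is a modified Rota-Baxter operator of weight $-\lambda^2$ on $S\ot T$. The key observation is that the computation already performed for Lemma~\mref{lem:RBO1} uses only: the modified Rota-Baxter relation for $P_S$ on the first factor; the modified Rota-Baxter relation for $P_T$ on the second factor; and the fact that $\vep_T$ is an algebra homomorphism satisfying $\vep_T\circ P_T=-\lambda\vep_T$. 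It never uses that the two factors coincide, nor any morphism out of the first factor---throughout that proof $\vep_\mo$ is applied only to second-factor elements. Hence the same calculation, read with $S$ in the first slot and $T$ in the second, proves this generalized statement after a relabeling.

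Granting this, the lemma is immediate. I would apply the generalized statement with $(S,P_S)=(R\ot R,\otp)$, which is a modified Rota-Baxter algebra of weight $-\lambda^2$ by Lemma~\mref{lem:RBO1}, and with $(T,P_T)=(R,\mop)$ together with $\vep_T=\vep_\mo$, which is a modified Rota-Baxter morphism to $(\bfk,-\lambda\id)$ by Lemma~\mref{lem:mrbl}. Expanding $\otp(\frak x\ot\frak x')+\lambda(\frak x\ot\frak x')$ as above shows that the resulting operator on $(R\ot R)\ot R$ is precisely $\ottp$, and since $(R\ot R)\ot R\cong R\ot R\ot R$ canonically as algebras, it follows that $\ottp$ is a modified Rota-Baxter operator of weight $-\lambda^2$ on $R\ot R\ot R$.

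The hard part will not be any new algebra but the bookkeeping that certifies the middle step: one must inspect the proof of Lemma~\mref{lem:RBO1} closely enough to be sure it isolates the two tensor slots cleanly, so that swapping the first factor for an arbitrary $(S,P_S)$ genuinely changes nothing. If a fully self-contained argument is preferred, the alternative is the direct route hinted at in the text: expand both sides of the modified Rota-Baxter identity for $\ottp$ on $R\ot R\ot R$ and match them termwise, exactly as done for $\otp$ in Lemma~\mref{lem:RBO1}. That route is strictly longer because of the third tensor factor and the extra $\lambda$-shift now sitting in the middle slot, but it introduces no new conceptual difficulty.
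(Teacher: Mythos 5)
Your proof is correct, but it takes a genuinely different route from the paper: the paper's proof of Lemma~\mref{lem:RBO2} is simply the direct three-factor expansion of both sides of the modified Rota-Baxter identity for $\ottp$, done ``with a similar argument'' to the termwise computation in Lemma~\mref{lem:RBO1}, whereas you derive it without any new computation by iterating the two-factor construction. Your key regrouping is sound: since
\begin{equation*}
\otp(\frak x\ot \frak x')+\lambda(\frak x\ot \frak x')
=(P_A(\frak x)+\lambda \frak x)\ot \vep_\mo(\frak x')1+\frak x\ot\big(P_A(\frak x')+\lambda \frak x'\big),
\end{equation*}
one indeed has $\ottp\big((\frak x\ot\frak x')\ot\frak x''\big)=\big(\otp(\frak x\ot\frak x')+\lambda(\frak x\ot\frak x')\big)\ot\vep_\mo(\frak x'')1+(\frak x\ot\frak x')\ot P_A(\frak x'')$, which is the $\otp$-shaped operator built from $(S,P_S)=(\FNN(A)\ot\FNN(A),\otp)$ and $(T,P_T)=(\FNN(A),P_A)$. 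Your middle step also checks out: inspecting the proof of Lemma~\mref{lem:RBO1}, the computation uses only the weight-$(-\lambda^2)$ identity for the operator in the first slot, the same identity for the operator in the second slot, and the facts that $\vep_\mo$ is a unital algebra map applied exclusively to second-slot elements and satisfies $\vep_\mo\circ P_A=-\lambda\,\vep_\mo$ (Eq.~(\mref{eq:countmor})); the two slots are never mixed, so the heterogeneous version with $(S,P_S)$, $(T,P_T)$ and an algebra morphism $\vep_T$ satisfying $\vep_T\circ P_T=-\lambda\,\vep_T$ holds by relabeling. All hypotheses of your generalized statement are supplied by Lemmas~\mref{lem:RBO1} and~\mref{lem:mrbl} together with Theorem~\mref{thm:freeao}~(\mref{it:MRB}). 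What your approach buys is modularity: it produces by induction a whole tower of modified Rota-Baxter operators on $\FNN(A)^{\ot n}$ (the alternative regrouping $\FNN(A)\ot(\FNN(A)\ot\FNN(A))$ with $\vep_T=\vep_\mo\ot\vep_\mo$ works too, since $(\vep_\mo\ot\vep_\mo)\circ\otp=-\lambda(\vep_\mo\ot\vep_\mo)$), which is exactly the structure needed for the coassociativity argument in Theorem~\mref{thm:bialg}; the cost, as you note yourself, is the bookkeeping required to certify that the earlier proof is slot-clean, which the paper's longer but self-contained direct expansion avoids.
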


Now we are ready for our main result of this subsection.
Recall  $\vep_\mo: \FNN(A) \to \bfk$ is an algebra homomorphism given in Lemma~\mref{lem:mrbl}.
Let $j_A: A \to \FN(A)$ be the natural embedding. By Theorem~\mref{thm:freeao}~(\mref{it:free}) and Lemma~\mref{lem:RBO1}, there is a (unique) modified Rota-Baxter algebra  morphism
$$\Delta_M: \FNN(A) \rightarrow \FNN(A)\ot \FNN(A)$$ such that $\Delta_M \circ j_A=\Delta_A$.

\begin{theorem}
Let $A$ be a bialgebra and $\lambda\in \bfk$. Then the quintuple $(\FNN(A), \diamond, 1, \Delta_\mo, \vep_\mo)$ is a bialgebra.
\mlabel{thm:bialg}
\end{theorem}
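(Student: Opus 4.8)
The plan is to run everything through the universal property of the free modified Rota-Baxter algebra, Theorem~\ref{thm:freeao}~(\ref{it:free}): two modified Rota-Baxter algebra morphisms out of $\FNN(A)$ coincide as soon as they agree after precomposition with $j_A$. Since $\Delta_\mo$ and $\vep_\mo$ are modified Rota-Baxter algebra morphisms by construction (Lemma~\ref{lem:mrbl} and the paragraph preceding the theorem), they are in particular unital algebra homomorphisms; this gives for free that the comultiplication and counit are algebra maps, so the bialgebra compatibility axioms hold automatically. As $(\FNN(A),\diamond,1)$ is already an algebra by Theorem~\ref{thm:freeao}~(\ref{it:alg}), the only things left to establish are the coassociativity of $\Delta_\mo$ and the counit property, i.e.\ that $(\FNN(A),\Delta_\mo,\vep_\mo)$ is a coalgebra.

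For the counit property I would first observe that $\vep_\mo\ot\id$ is a modified Rota-Baxter algebra morphism from $(\FNN(A)\ot\FNN(A),\diamond',\otp)$ to $(\FNN(A),\diamond,P_A)$. Indeed, applying $\vep_\mo\ot\id$ to $\otp(\frak x\ot\frak x')=(P_A(\frak x)+\lambda \frak x)\ot\vep_\mo(\frak x')1+\frak x\ot P_A(\frak x')$ and invoking the relation $\vep_\mo\circ P_A=-\lambda\vep_\mo$ from Eq.~(\ref{eq:countmor}), the first summand dies since $\vep_\mo(P_A(\frak x)+\lambda\frak x)=0$, and what remains is exactly $P_A\circ(\vep_\mo\ot\id)$. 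Hence $(\vep_\mo\ot\id)\circ\Delta_\mo$ is a composite of modified Rota-Baxter morphisms, so is one itself; precomposing with $j_A$ and using the counit axiom of the bialgebra $A$ gives $(\vep_\mo\ot\id)\circ\Delta_\mo\circ j_A=j_A$. As $\id_{\FNN(A)}$ also has this property, uniqueness forces $(\vep_\mo\ot\id)\circ\Delta_\mo=\id$, and a symmetric computation (this time using $\vep_\mo(P_A(\frak x'))=-\lambda\vep_\mo(\frak x')$) handles $(\id\ot\vep_\mo)\circ\Delta_\mo=\id$.

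For coassociativity I would compare $L:=(\Delta_\mo\ot\id)\circ\Delta_\mo$ with $R:=(\id\ot\Delta_\mo)\circ\Delta_\mo$, both regarded as maps into the modified Rota-Baxter algebra $(\FNN(A)^{\ot 3},\ottp)$ of Lemma~\ref{lem:RBO2}. The crucial step is to verify the two intertwining relations $(\Delta_\mo\ot\id)\circ\otp=\ottp\circ(\Delta_\mo\ot\id)$ and $(\id\ot\Delta_\mo)\circ\otp=\ottp\circ(\id\ot\Delta_\mo)$, which say precisely that $\Delta_\mo\ot\id$ and $\id\ot\Delta_\mo$ are modified Rota-Baxter morphisms into $(\FNN(A)^{\ot 3},\ottp)$; granting them, $L$ and $R$ are modified Rota-Baxter morphisms as well. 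They agree after $j_A$ because both restrict on $A$ to the iterated coproduct $(\Delta_A\ot\id)\Delta_A=(\id\ot\Delta_A)\Delta_A$, which is well defined by coassociativity of $A$, and uniqueness then yields $L=R$.

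The main obstacle is the verification of these two intertwining relations, and this is exactly where the carefully engineered definitions of $\otp$ and $\ottp$ in Lemmas~\ref{lem:RBO1} and~\ref{lem:RBO2} are used. Writing $\Delta_\mo(\frak x)=\sum\frak x_{(1)}\ot\frak x_{(2)}$ in Sweedler notation and expanding the defining relation $\Delta_\mo\circ P_A=\otp\circ\Delta_\mo$, the relation for $\Delta_\mo\ot\id$ becomes a term-by-term match once one regroups $\frak x_{(1)}\ot P_A(\frak x_{(2)})+\lambda\,\frak x_{(1)}\ot\frak x_{(2)}$ as $\frak x_{(1)}\ot(P_A(\frak x_{(2)})+\lambda\frak x_{(2)})$, and notably it needs nothing beyond the operator definitions. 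The relation for $\id\ot\Delta_\mo$ is slightly more delicate, since collapsing the factor $\sum\vep_\mo(\frak x'_{(1)})\vep_\mo(\frak x'_{(2)})$ to $\vep_\mo(\frak x')$ requires the auxiliary identity $(\vep_\mo\ot\vep_\mo)\circ\Delta_\mo=\vep_\mo$; I would dispatch this identity first as a one-line application of the universal property, noting that $\vep_\mo\ot\vep_\mo$ is a modified Rota-Baxter morphism $(\FNN(A)^{\ot 2},\otp)\to(\bfk,-\lambda\id)$ that agrees with $\vep_\mo$ on $A$. Thus I would order the argument as: the auxiliary counit identity, then the two intertwining relations, then coassociativity, with the counit property proved independently alongside.
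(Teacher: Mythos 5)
Your proposal is correct and takes essentially the same route as the paper: the counit property via uniqueness in the universal property (the paper directly computes that $\phi=(\vep_\mo\ot\id)\Delta_\mo$ commutes with $P_A$ using $\vep_\mo\circ P_A=-\lambda\,\vep_\mo$, which is the same cancellation you package as ``$\vep_\mo\ot\id$ is a modified Rota-Baxter morphism from $(\FNN(A)\ot\FNN(A),\otp)$ to $(\FNN(A),P_A)$''), and coassociativity by viewing $(\Delta_\mo\ot\id)\Delta_\mo$ and $(\id\ot\Delta_\mo)\Delta_\mo$ as modified Rota-Baxter morphisms into $(\FNN(A)\ot\FNN(A)\ot\FNN(A),\ottp)$ that agree on $A$. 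The only difference is thoroughness, to your credit: you make explicit the intertwining relations $(\Delta_\mo\ot\id)\otp=\ottp(\Delta_\mo\ot\id)$ and $(\id\ot\Delta_\mo)\otp=\ottp(\id\ot\Delta_\mo)$, together with the auxiliary identity $(\vep_\mo\ot\vep_\mo)\Delta_\mo=\vep_\mo$ needed for the second one, all of which the paper leaves implicit when it asserts that the two iterated coproducts are modified Rota-Baxter morphisms.
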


\begin{proof}
It suffices to prove the counity of $\vep_\mo$ and coassociativity of $\Delta_\mo$.
For the former, denote by
$$\phi := (\vep_\mo \ot \id ) \Delta_M: \FNN(A) \to \FNN(A)$$
Then $\phi$ is an algebra homomorphism, since $\vep_\mo$ and $\Delta_M$ are algebra homomorphisms.
Further it is a modified Rota-Baxter algebra  morphism. Indeed,
for any $\frak x\in \FNN(A)$,
\begin{align*}
\phi \circ P_A(\frak x)=&\big((\vep_\mo \ot \id ) \Delta_M \big)P_A(\frak x)=(\vep_\mo \ot \id ) (\Delta_M P_A)(\frak x)\\
=&(\vep_\mo \ot \id )  (\otp\Delta_M)(\frak x)\quad(\text{by $\Delta_M$ being an modified Rota-Baxter algebra morphism}) \\
=&(\vep_\mo \ot \id ) \otp\Big(\sum_{(\frak x)}\frak x_{(1)} \ot \frak x_{(2)}\Big) \quad(\text{by Sweedler's notation})\\
=&\sum_{(\frak x)}(\vep_\mo \ot \id )\Big((P_A(\frak x_{(1)})+\lambda \frak x_{(1)})\ot \vep_\mo(\frak x_{(2)})1+\frak x_{(1)}\ot P_A(\frak x_{(2)}) \Big)\quad(\text{by Eq.~(\mref{eq:mRB})})\\
=&\sum_{(\frak x)}\bigg(\vep_\mo (P_A(\frak x_{(1)})) \ot \vep_\mo(\frak x_{(2)})1 +\lambda \vep_\mo (\frak x_{(1)})\ot \vep_\mo(\frak x_{(2)})1+\vep_\mo (\frak x_{(1)})\ot P_A(\frak x_{(2)}) \bigg)\\
=&\sum_{(\frak x)}\bigg(-\lambda\vep_\mo (\frak x_{(1)}) \ot \vep_\mo(\frak x_{(2)})1 +\lambda \vep_\mo (\frak x_{(1)})\ot \vep_\mo(\frak x_{(2)})1+\vep_\mo (\frak x_{(1)})\ot P_A(\frak x_{(2)}) \bigg)\\
& \hspace{7cm} (\text{by Eq.~(\mref{eq:countmor})})\\
=&\sum_{(\frak x)} \vep_\mo(\frak x_{(1)})\ot P_A(\frak x_{(2)}) = \sum_{(\frak x)} P_A(\vep_\mo(\frak x_{(1)})\frak x_{(2)})
= P_A((\vep_\mo \ot \id ) \Delta_M(\frakx))\\
=&P_A\circ \phi(\frakx).
\end{align*}
By unicity in the universal property of $\FNN(A)$, we have
$$(\vep_\mo \ot \id ) \Delta_M = \phi =\id _{\FNN(A)}$$ and so $\vep_\mo$ is a left counit.
By symmetry, we can prove $\vep_\mo$ is also a right counit.

Moreover, both $(\Delta_M\ot \id )\Delta_M$ and $(\id \ot \Delta_M )\Delta_M$ are modified Rota-Baxter algebra morphisms from $\FNN(A)$ to $\FNN(A)\ot \FNN(A) \ot \FNN(A)$, which is equipped with the modified Rota-Baxter operator $\ottp$ of weight $-\lambda^2$ given in Lemma~\mref{lem:RBO2}.
As they coincide on $A$
\[
(\Delta_M\ot \id )\Delta_M |_A = (\Delta_A\ot \id )\Delta_A = (\id \ot \Delta_A)\Delta_A = (\id \ot \Delta_M)\Delta_M |_A,
\]
they are equal and so $\Delta_M$ is coassociative. Here $\Delta_A$ is the coproduct on $A$.
Thus the quintuple $(\FNN(A), \diamond, 1, \Delta_\mo, \vep_\mo)$ is a bialgebra.
\end{proof}

\begin{remark}
For any $\frak x\in \FNN(A)$, we have
\begin{align*}
\Delta_M \circ P_A(\frak x)=&\otp\circ \Delta_M(\frak x)\quad(\text{by $\Delta_M$ being a modified Rota-Baxter algebra morphism}) \\
=&\otp\Big(\sum_{(\frak x)}\frak x_{(1)} \ot \frak x_{(2)}\Big) \quad(\text{by Sweedler's notation})\\
=&\sum_{(\frak x)}\Big((P_A(\frak x_{(1)})+\lambda \frak x_{(1)})\ot \vep_\mo(\frak x_{(2)})1+\frak x_{(1)}\ot P_A(\frak x_{(2)}) \Big)\quad(\text{by Eq.~(\mref{eq:mRB})})\\
=&\sum_{(\frak x)}\Big(P_A(\frak x_{(1)})\ot \vep_\mo(\frak x_{(2)})1 +\lambda \frak x_{(1)}\ot \vep_\mo(\frak x_{(2)})1+\frak x_{(1)}\ot P_A(\frak x_{(2)}) \Big)\\
=&\sum_{(\frak x)}\Big(P_A(\frak x_{(1)}\vep_\mo(\frak x_{(2)}))\ot1 +\lambda \frak x_{(1)}\vep_\mo(\frak x_{(2)})\ot1+(\id \ot P_A)(\frak x_{(1)}\ot \frak x_{(2)} )\Big)\\
=&P_{A}(\frak x)\ot 1+\lambda \frak x\ot 1+(\id \ot P_A)\Delta_M(\frak x) \quad (\text{by $\vep_M$ being the counit}).
\end{align*}
In other words,
\begin{align}
\Delta_M  P_A=P_A \ot 1 +(\id \ot P_A)\Delta_M+ \lambda\id \ot 1,\mlabel{eq:cocyc}
\end{align}
which is analogue to the 1-cocycle condition in the well-known Connes-Kreimer Hopf algebra on rooted trees~\mcite{CK}.
\end{remark}

\subsection{The Hopf algebraic structure}
\mlabel{ss:hopf}

In this last part of the paper we show that if we start with $A$ being a connected filtered bialgebra and $\lambda\in \bfk$, then the bialgebra $\FNN(A)$ also has a connected filtration and hence is a Hopf algebra.

\begin{defn}
A bialgebra $(A, m, \mu, \Delta, \vep)$ is called {\bf filtered} if it has an increasing filtration $A_n$, $n\geq 0$, such that
$$A=\cup_{n\geq 0} A_n,\, A_p A_q\subseteq A_{p+q} \,\text{ and }\,  \Delta(A_n)\subseteq \sum_{p+q=n}A_p\ot A_q \,\text{ for }\,  p, q, n\geq 0.$$
A filtered bialgebra $A$ is called {\bf connected} if  $A_0 = \im\, \mu $ and $A = A_0 \oplus \ker \varepsilon$.
\end{defn}
The following result is well-known.

\begin{lemma}{\rm \cite{DM}}
A connected filtered bialgebra is a Hopf algebra.
\mlabel{lem:hopf}
\end{lemma}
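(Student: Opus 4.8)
The plan is to construct an antipode by a recursion on the filtration degree, exploiting connectedness. Recall that the linear endomorphisms $\End(A)=\Hom_\bfk(A,A)$ form an associative unital algebra under the convolution product $f\star g := m\circ(f\ot g)\circ\Delta$, whose unit is $e:=\mu\circ\vep$. By definition, $A$ is a Hopf algebra precisely when $\id_A$ admits a two-sided convolution inverse (the antipode); so it suffices to produce such an inverse $S$.

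First I would record the consequences of connectedness: $A_0=\im\mu=\bfk 1$, and since $A_0\subseteq A_n$ for all $n$, the filtration splits as $A_n=A_0\oplus(\ker\vep\cap A_n)$. The crucial structural input is the behaviour of the coproduct on the augmentation ideal: for $x\in\ker\vep\cap A_n$ the counit axioms $(\vep\ot\id)\Delta=\id=(\id\ot\vep)\Delta$ isolate the primitive part, so that the reduced coproduct $\tilde\Delta(x):=\Delta(x)-x\ot 1-1\ot x$ lies in $\sum_{p+q=n,\,p,q\geq 1}(\ker\vep\cap A_p)\ot(\ker\vep\cap A_q)$. In particular every tensor leg of $\tilde\Delta(x)$ sits in $A_{n-1}$ and is annihilated by $\vep$.

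With this in hand I would define $S$ inductively on $n$. Set $S(1)=1$ on $A_0$. Assuming $S$ has already been defined on $A_{n-1}$, extend it to $\ker\vep\cap A_n$ (and hence to $A_n$ by the splitting above) by the formula $S(x):=-x-\sum S(x_{(1)}')\,x_{(2)}'$, where $\tilde\Delta(x)=\sum x_{(1)}'\ot x_{(2)}'$; this is well defined because each $x_{(1)}'\in A_{n-1}$. A direct computation then gives, for $x\in\ker\vep\cap A_n$,
$$
(S\star\id)(x)=S(x)\cdot 1+S(1)\cdot x+\sum S(x_{(1)}')\,x_{(2)}' = 0 = \vep(x)\,1,
$$
so that $S\star\id=e$ on all of $A$, i.e. $S$ is a left convolution inverse of $\id_A$. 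Constructing a right inverse $S'$ by the symmetric recursion $S'(x):=-x-\sum x_{(1)}'\,S'(x_{(2)}')$ and invoking associativity of $\star$ then yields $S=S\star e=S\star(\id\star S')=(S\star\id)\star S'=e\star S'=S'$, so the one-sided inverse is in fact two-sided and is the desired antipode.

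The main obstacle is the structural claim on $\tilde\Delta$: one must combine the filtration hypothesis $\Delta(A_n)\subseteq\sum_{p+q=n}A_p\ot A_q$ with the counit axioms to guarantee that, after subtracting the two primitive terms $x\ot 1$ and $1\ot x$, the surviving coproduct has both legs of \emph{strictly} positive filtration degree and lying in $\ker\vep$. Once this degree drop is secured the recursion terminates, and every remaining verification is a routine induction on $n$.
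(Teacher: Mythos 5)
The paper offers no internal proof of this lemma: it is quoted as well known, with only the citation to Manchon \cite{DM}, so there is nothing in the text to compare against. Your proposal correctly reconstructs the standard argument behind that citation, and it is complete in all essentials. The one point you flag as the main obstacle — that both legs of $\tilde\Delta(x)$ have strictly positive filtration degree and lie in $\ker\vep$ — does hold, and closes exactly as you suggest: apply $(\pi\ot\pi)$ to $\Delta(x)$, where $\pi:=\id-\mu\vep$ is the projection onto $\ker\vep$ along $A_0=\bfk\,1$; the counit axioms show $(\pi\ot\mu\vep)\Delta(x)=x\ot 1$ and $(\mu\vep\ot\pi)\Delta(x)=1\ot x$ while $(\mu\vep\ot\mu\vep)\Delta(x)=0$, and since $\pi(A_p)\subseteq A_p$ (because $A_0\subseteq A_p$) and $\ker\vep\cap A_0=0$, the surviving part $(\pi\ot\pi)\Delta(x)$ lies in $\sum_{p+q=n,\,p,q\geq 1}(\ker\vep\cap A_p)\ot(\ker\vep\cap A_q)$, so the recursion for $S$ is well founded; note also that your formula is consistent across the filtration, since $\tilde\Delta(x)$ is intrinsic to $x$ and does not depend on the chosen $n$. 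The left-inverse computation, the symmetric right inverse $S'$, and the identification $S=S'$ via associativity of convolution are all correct. For comparison with the source: \cite{DM} packages the same induction as the convolution geometric series $S=\sum_{k\geq 0}(e-\id)^{\star k}$, which makes sense because $(e-\id)^{\star(n+1)}$ vanishes on $A_n$ by the same degree-drop property; your recursion is precisely the unrolled form of that series, so the two presentations buy the same thing, yours being more explicitly constructive and the series form giving the two-sidedness of the inverse at once.
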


Our discussion in this section will be based on the following condition.
\begin{defn}
A $\bfk$-basis $X$ of a connected filtered bialgebra $A=\cup_{n\geq 0}A_n$ is called a {\bf filtered basis} of $A$ if there is an increasing filtration $X=\cup_{n\geq 0} X_n$ such that
$$A_n=\bfk X_n, X\backslash \{1\} \subseteq \ker \vep, X_0=\{1\}.$$
Here $1$ is the identity of $A$. Elements  $x\in X_n\setminus X_{n-1}$ are said to have {\bf degree} $n$, denoted by $\deg_A(x)=n$.
\mlabel{de:filbase}
\end{defn}

Let $A$ be a connected filtered bialgebra with a filtered basis $X$.
Recall that $\frakX_\infty$ constructed in Subsection~\mref{ss:bmrb} is a \bfk-basis of the free modified Rota-Baxter algebra $\FNN(A)$.
We now define the {\bf degree} $\deg(\frakx)$ for $\frakx\in \frakX_\infty$ by induction on $\dep(\frakx)$.
For the initial step of $\dep(\frakx) = 0$, we get $\frakx\in X\subseteq A$ and define
\begin{equation}
\deg(\frakx):= \deg_A(\frakx).
\mlabel{eq:deg1}
\end{equation}
For the inductive step of $\dep(\frakx) \geq 1$, if $\bre(\frakx) =1$, then $\frakx = \lc \lbar{\frakx}\rc$ and we define
\begin{equation}
\deg(\frakx):= \deg(\lbar{\frakx}) +1;
\mlabel{eq:deg2}
\end{equation}
if $\bre(\frakx) \geq 2$, then write $\frakx = \frakx_1 \cdots \frakx_b$ in the standard decomposition and define
\begin{equation}
\deg(\frakx):= \sum_{i=1}^b \deg(\frakx_i),
\mlabel{eq:deg3}
\end{equation}
where each $\deg(\frakx_i)$ is defined either in Eq.~(\mref{eq:deg1}) or in Eq.~(\mref{eq:deg2}) by the induction hypothesis.

\begin{remark}
For later applications, we also use the notion $\deg( c \frakx)=\deg(\frakx)$ for $c\in \bfk\backslash \{0\}$.
\mlabel{rk:deg}
\end{remark}

Denote
\begin{equation}
\hma:=\FNN(A)\,\text{ and }\, \hma_n:= \bfk \{\frakx\in \frakX_\infty \mid \deg(\frakx) \leq n\}\,\text{ for }\, n\geq 0.
\mlabel{eq:dhn}
\end{equation}
Then
\begin{equation}
\hma=\bigcup_{n\geq 0} \hma_n \,,\, \hma_0 = \bfk,\, \hma = \bfk\,1 \oplus \ker \vep_\mo \,\text{ and }\, \mop(\hma_n)\subseteq \hma_{n+1}.
\mlabel{eq:subset}
\end{equation}

Now we are going to prove that $\hma$ is a filtered bialgebra, beginning with the compatibility of the multiplication with the filtration.

\begin{lemma} For $p,q\geq 0$, we have
\begin{equation}
\hma_p \diamond \hma_q \subseteq \hma_{p+q}.
\mlabel{eq:mgrad}
\end{equation}
\mlabel{lem:mgrad}
\end{lemma}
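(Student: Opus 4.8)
The plan is to prove $\hma_p \diamond \hma_q \subseteq \hma_{p+q}$ by a double induction, the outer on the total depth $n = \dep(\frak x) + \dep(\frak x')$ and the inner on the total breadth, exactly mirroring the structure of the definition of $\shpr$ and of the associativity proof of Theorem~\mref{thm:freeao}~(\mref{it:alg}). It suffices to check the degree inequality on basis elements: for $\frak x, \frak x' \in \frakX_\infty$ with $\deg(\frak x) \leq p$ and $\deg(\frak x') \leq q$, I must show every term appearing in $\frak x \diamond \frak x'$ has degree $\leq p+q$. Since the product is defined recursively and degree is additive over standard decompositions (Eq.~(\mref{eq:deg3})), the statement reduces by bilinearity and the filtration structure to showing $\deg(\frak x \diamond \frak x') \leq \deg(\frak x) + \deg(\frak x')$ for each pair of basis elements, interpreting the right-hand side via Remark~\mref{rk:deg} when $\frak x \diamond \frak x'$ is a $\bfk$-linear combination.

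First I would dispatch the breadth reduction. If $\bre(\frak x) \geq 2$ or $\bre(\frak x') \geq 2$, then by Lemma~\mref{lem:match}~(\mref{it:mat2}) and Eq.~(\mref{eq:cdiam}) the product $\frak x \diamond \frak x'$ splits as a concatenation in which only the inner factor $\frak x_b \diamond \frak x'_1$ (of strictly smaller breadth) is nontrivially multiplied, the remaining factors being carried along by concatenation. Since $\deg$ is additive over standard decompositions, the inner inductive hypothesis on breadth gives $\deg(\frak x_b \diamond \frak x'_1) \leq \deg(\frak x_b) + \deg(\frak x'_1)$, and summing the (fixed) degrees of the outer factors yields the bound. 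This reduces everything to the breadth-one case, where $\frak x, \frak x'$ are each in $X$ or in $\lc \frakX_\infty \rc$.

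Next I would treat the breadth-one base cases according to Eq.~(\mref{eq:shprod}). When exactly one of $\frak x, \frak x'$ lies in $X$, the product is pure concatenation and the degree is exactly additive, so equality holds. The essential case is $\frak x = \lc \ox \rc$ and $\frak x' = \lc \ox' \rc$, where
\[
\frak x \diamond \frak x' = \lc \lc \ox \rc \diamond \ox' \rc + \lc \ox \diamond \lc \ox' \rc \rc + \kappa\, \ox \diamond \ox'.
\]
Here $\deg(\frak x) = \deg(\ox) + 1$ and $\deg(\frak x') = \deg(\ox') + 1$ by Eq.~(\mref{eq:deg2}). For each of the three summands I apply the outer induction hypothesis on depth: the products $\lc \ox \rc \diamond \ox'$, $\ox \diamond \lc \ox' \rc$ and $\ox \diamond \ox'$ all involve strictly smaller total depth (as recorded in the depth computations following Eq.~(\mref{eq:shprod})), so each has degree bounded by the sum of the degrees of its two arguments. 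Adding $1$ for each outer bracket via Eq.~(\mref{eq:deg2}), the first two terms acquire degree $\leq (\deg(\ox)+1) + (\deg(\ox')+1) = \deg(\frak x) + \deg(\frak x')$, and the third term $\kappa\, \ox \diamond \ox'$ has degree $\leq \deg(\ox) + \deg(\ox') < \deg(\frak x) + \deg(\frak x')$ by Remark~\mref{rk:deg}. Thus every term satisfies the bound.

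The main obstacle to watch is the bookkeeping in the third case: one must confirm that the $\deg$ function is well-defined and compatible with the recursion precisely so that the ``$+1$'' contributions from the outer brackets in the first two terms exactly absorb the two ``$+1$''s coming from unwrapping $\frak x$ and $\frak x'$, while the bracket-free term $\kappa\, \ox \diamond \ox'$ stays safely below the bound. This is where a naive degree assignment could fail; the inductive definition in Eqs.~(\mref{eq:deg1})--(\mref{eq:deg3}) is designed to make it work, and the proof is essentially verifying that the depth-decreasing structure of Eq.~(\mref{eq:shprod}) is matched term-by-term by the additivity of $\deg$ under bracketing. No genuinely hard estimate arises; the content is the careful alignment of the two inductions with the recursive definition of $\diamond$.
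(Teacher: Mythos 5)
Your strategy is essentially the paper's own: the paper likewise reduces by an inner induction on total breadth via Eq.~(\mref{eq:cdiam}) to the breadth-one case and then expands the bracket--bracket product by Eq.~(\mref{eq:shprod}), absorbing the two ``$+1$''s from $\mop(\hma_n)\subseteq \hma_{n+1}$ exactly as you do; the only structural difference is that its outer induction runs on the sum of filtration degrees $s=p+q$ rather than on the sum of depths. Your depth-based variant is equally valid, since the depth computations recorded after Eq.~(\mref{eq:shprod}) show that all three inner products have strictly smaller total depth (and passing to factors of a standard decomposition does not increase depth), so the recursion is well-founded; if anything it tracks the recursive definition of $\diamond$ more directly.

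There is, however, one small but genuine omission: you never treat the case where both breadth-one factors lie in $X$. This case is not vacuous. It is the base of your outer induction (total depth $0$), and it also recurs inside your breadth reduction whenever $t(\frakx)=h(\frakx')=0$, so that $\frakx_b,\frakx'_1\in X$ and $\frakx_b\diamond\frakx'_1$ is the product in $A$ --- a $\bfk$-linear combination of elements of $X$, not a concatenation. Here the degree is not ``exactly additive''; the required bound, that each basis term of $\frakx_b\,\spr\,\frakx'_1$ has degree at most $\deg(\frakx_b)+\deg(\frakx'_1)$, is precisely where the hypothesis that $X$ is a filtered basis of the connected filtered bialgebra $A$ (so $A_pA_q\subseteq A_{p+q}$ and $A_n=\bfk X_n$, Definition~\mref{de:filbase}) must be invoked. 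This is the only point in the whole proof where the filtration on $A$ itself is used, and without it the lemma fails, so the case deserves explicit mention (as does the trivial case $\frakx=1$ or $\frakx'=1$, which the paper dispatches separately). Once these are inserted, your argument is complete and coincides with the paper's.
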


\begin{proof}
Let $\frakx\in \hma_p$ and $\frakx'\in \hma_q$ be two basis elements in $\frakX_\infty$.
Then
$$\deg(\frakx)\leq p\,\text{ and }\, \deg(\frakx')\leq q.$$
We now verify Eq.~(\mref{eq:mgrad}) by induction on the sum $s:=p+q\geq 0$. When $s=0$, then $p=q=0$. By Eq.~(\mref{eq:subset}), we obtain that $\frakx=\frakx'=1$ and so $\frakx\,\diamond\,\frakx'=1\in \hma_0$. This finishes the initial step.

Given an $s\geq 0$, assume that Eq.~(\mref{eq:mgrad}) holds for $\frakx,\frakx'$ with $p+q=s$ and consider case $p+q=s+1$.  If $\frakx=1$ or
$\frakx'=1$, without loss of generality, letting $\frakx=1$, then $p=0$ and
$$\frakx\,\diamond\, \frakx' = \frakx' \in \hma_q = \hma_{p+q}.$$
So we may suppose $\frakx,\frakx'\neq 1$.
Write
$$\frakx =\frakx_{1} \cdots \frakx_{b} \,\text{ and }\, \frakx':=\frakx'_{1} \cdots \frakx'_{b'}\,\text{ with }\, b, b'\geq 1$$
in their standard decompositions.
Under this condition, we proceed to prove Eq.~(\mref{eq:mgrad}) by induction on the sum $t:=b+b'\geq 2$.
When $t=2$, then $b=b'=1$. If $\frakx\in X\subseteq A$ or $\frakx'\in X\subseteq A$, then
by Eq.~(\mref{eq:deg3}),
$$\frakx\,\diamond\, \frakx' = \frakx \frakx'  \in\hma_{\deg(\frakx) + \deg(\frakx')} \subseteq \hma_{p+q}.$$
It remains to check the outstanding case of
$$\frakx:=\frakx_{1}=\mop(\lbar{\frakx}_{1})\,\text{ and }\, \frakx':=\frakx'_{1}=\mop(\lbar{\frakx}'_{1}),$$
where $$\lbar{\frakx}_{1},\lbar{\frakx}'_{1}\in \frak X_\infty\,\text{ and }\,\deg(\frakx_{1})+\deg(\frakx'_{1}) \leq s+1.$$
Then
\begin{align*}
\deg(\frakx_{1})+\deg(\lbar{\frakx}'_{1})&=\deg(\frakx_{1})+\deg(\frakx'_{1})-1 \leq s,\\
\deg(\lbar{\frakx}_{1})+\deg(\frakx'_{1})&=\deg(\frakx_{1})-1+\deg(\frakx'_{1})\leq s,\\
\deg(\lbar{\frakx}_{1})+\deg(\lbar{\frakx'_{1}})&=\deg(\frakx_{1})-1+\deg(\frakx'_{1})-1\leq s-1.
\end{align*}
By the induction hypothesis on $s$, we have
\begin{align*}
\frakx_{1}\,\diamond\,\lbar{\frakx}'_{1},\, \lbar{\frakx}_{1}\,\diamond\,\frakx'_{1} \in  \hma_s\,\text{ and }\, \lbar{\frakx_{1}}\,\diamond\,\lbar{\frakx}'_{1}\in \hma_{s-1},
\end{align*}
which implies from Eq.~(\mref{eq:subset}) that
\begin{align*}
\mop(\frakx_{1}\,\diamond\,\lbar{\frakx}'_{1}), \, \mop(\lbar{\frakx}_{1}\,\diamond\,\frakx'_{1}) \in \hma_{s+1}.
\end{align*}
Hence by Eq.~(\mref{eq:shprod}),
$$\frakx_1\,\diamond\,\frakx'_{1}=\mop(\frakx_{1}\,\diamond\,\lbar{\frakx}'_{1})
+\mop(\lbar{\frakx}_{1}\,\diamond\,\frakx'_{1}) -\lambda^2 \lbar{\frakx}_{1}\,\diamond\,\lbar{\frakx'_{1}} \in \hma_{s+1}.$$

Assume that Eq.~(\mref{eq:mgrad}) holds for $b+b'=t\geq2$ and $p+q=s+1$ and consider the case when $b+b'=t+1\geq3$ and $p+q=s+1$. So either $\frakx$ or $\frakx'$ has breadth greater than or equal to $2$, giving us  three cases to consider:

\noindent
{\bf Case 1}. $\bre(\frakx)\geq2$. Let $\frakx:=\frakx_{1,\,1} \frakx_{1,\,2}$, where $\frakx_{1,\,1},\frakx_{1,\,2}\in \frak X_\infty$ with breadths $\bre(\frakx_{1,\,1}),\bre(\frakx_{1,\,2})\geq1$ respectively. By Eq.~(\mref{eq:deg3}), we obtain $\deg(\frakx)=\deg(\frakx_{1,\,1})+\deg(\frakx_{1,\,2}) $.
From Eq.~(\mref{eq:cdiam}),
$$\frakx\,\diamond\,\frakx' = (\frakx_{1,\,1} \frakx_{1,\,2})\,\diamond\,\frakx'
= \frakx_{1,\,1} (\frakx_{1,\,2}\,\diamond\,\frakx').$$
By the induction on $t$, we have
$$\frakx_{1,\,2}\,\diamond\,\frakx'\in \hma_{\deg(\frakx_{1,\,2})+\deg(\frakx')},$$
whence by Eq.~(\mref{eq:deg3}),
$$\frakx\,\diamond\,\frakx' = \frakx_{1,\,1} (\frakx_{1,\,2}\,\diamond\,\frakx')
\in \hma_{\deg(\frakx_{1,\,1}) + \deg(\frakx_{1,\,2})+\deg(\frakx')} = \hma_{\deg(\frakx) +\deg(\frakx')}.$$

\noindent
{\bf Case 2}. $\bre(\frakx')\geq2$. The proof of this case is similar to Case 1.

\noindent
{\bf Case 3}. $\bre(\frakx)\geq2$ and $\bre(\frakx')\geq2$. Let $\frakx:=\frakx_{1,\,1}\frakx_{1,\,2}$ and $\frakx':=\frakx'_{1,\,1} \frakx'_{1,\,2}$,
where $\frakx_{1,\,1},\frakx_{1,\,2},
\frakx'_{1,\,1},\frakx'_{1,\,2}\in \frak X_\infty$ with breadths $\bre(\frakx_{1,\,1}),\bre(\frakx_{1,\,2}),\bre(\frakx'_{1,\,1}),\bre(\frakx'_{1,\,2})\geq1$ respectively. By Eq.~(\mref{eq:deg3}), we obtain
\begin{equation*}
\deg(\frakx)=\deg(\frakx_{1,\,1})+\deg(\frakx_{1,\,2})\,\text{ and }\,
\deg(\frakx')=\deg(\frakx'_{1,\,1})+\deg(\frakx'_{1,\,2}).
\mlabel{eq:c0}
\end{equation*}
Thus by Eq.~(\mref{eq:cdiam}),
$$\frakx\,\diamond\,\frakx' =(\frakx_{1,\,1} \frakx_{1,\,2})\,\diamond\,(\frakx'_{1,\,1} \frakx'_{1,\,2})
=\frakx_{1,\,1} (\frakx_{1,\,2}\,\diamond\,\frakx'_{1,\,1}) \frakx'_{1,\,2}.$$
By the induction on $t$, we have
\begin{equation*}
\frakx_{1,\,2}\,\diamond\,\frakx'_{1,\,1}\in \hma_{\deg(\frakx_{1,\,2})+\deg(\frakx'_{1,\,1})}.
\mlabel{eq:c2}
\end{equation*}
With a similar argument to Case 1. we get
$$\frakx\,\diamond\,\frakx' = \frakx_{1,\,1} (\frakx_{1,\,2}\,\diamond\,\frakx'_{1,\,1}) \frakx'_{1,\,2} \in \hma_{\deg(\frakx)+\deg(\frakx')}.$$
This finishes the proof.
\end{proof}

For the compatibility of the coproduct with the filtration, we have
\begin{lemma}
For $n\geq0$, we have
\begin{equation}
\Delta_\mo(\hma_n)\subseteq\sum\limits^{}_{p+q=n}\hma_p\ot \hma_q.
\mlabel{eq:cmgrad}
\end{equation}
\mlabel{lem:cmgrad}
\end{lemma}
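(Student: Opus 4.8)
The plan is to reduce Eq.~(\mref{eq:cmgrad}) to a statement about basis elements and then prove the latter by a nested induction. It suffices to show that for every basis element $\frakx\in\frakX_\infty$ one has
\[
\Delta_M(\frakx)\in\sum_{p+q=\deg(\frakx)}\hma_p\ot\hma_q,
\]
because $\hma_n$ is spanned by those $\frakx$ with $\deg(\frakx)\leq n$, and since the $\hma_p$ form an increasing chain we have $\sum_{p+q=m}\hma_p\ot\hma_q\subseteq\sum_{p+q=n}\hma_p\ot\hma_q$ whenever $m\leq n$ (replace a summand $\hma_p\ot\hma_q$ by $\hma_p\ot\hma_{q+(n-m)}$); so the displayed claim yields Eq.~(\mref{eq:cmgrad}) by linearity. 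I would prove the claim by induction on $\dep(\frakx)$, and within a fixed depth first dispose of the breadth-one elements and then of those of breadth $\geq 2$. The reason for this two-layer scheme is that passing to the standard factors lowers the breadth but need not lower the depth, so a plain induction on either statistic is not well-founded; what makes the nesting work is that the only breadth-one elements of depth $\geq 1$ are of the form $\lc\lbar{\frakx}\rc$ with $\dep(\lbar{\frakx})=\dep(\frakx)-1$, so the cocycle identity feeds such an element back to strictly smaller depth.

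For the base case $\dep(\frakx)=0$ we have $\frakx\in X\subseteq A$, and since $\Delta_M\circ j_A=\Delta_A$ together with $\deg(\frakx)=\deg_A(\frakx)$ from Eq.~(\mref{eq:deg1}) and the inclusions $A_p\subseteq\hma_p$ (a consequence of $A_p=\bfk X_p$ and the definition of $\deg$), the filteredness of $A$ gives $\Delta_M(\frakx)=\Delta_A(\frakx)\in\sum_{p+q=\deg_A(\frakx)}A_p\ot A_q\subseteq\sum_{p+q=\deg(\frakx)}\hma_p\ot\hma_q$. For a breadth-one element of depth $\geq 1$, write $\frakx=\lc\lbar{\frakx}\rc=P_A(\lbar{\frakx})$ with $\dep(\lbar{\frakx})=\dep(\frakx)-1$ and apply the cocycle identity Eq.~(\mref{eq:cocyc}):
\[
\Delta_M\big(P_A(\lbar{\frakx})\big)=P_A(\lbar{\frakx})\ot 1+(\id\ot P_A)\Delta_M(\lbar{\frakx})+\lambda\,\lbar{\frakx}\ot 1.
\]
Writing $d:=\deg(\lbar{\frakx})$, so that $\deg(\frakx)=d+1$ by Eq.~(\mref{eq:deg2}), I would check the three summands separately: $P_A(\lbar{\frakx})=\frakx\in\hma_{d+1}$ and $1\in\hma_0$, so the first term lies in $\hma_{d+1}\ot\hma_0$; likewise $\lbar{\frakx}\ot 1\in\hma_d\ot\hma_0\subseteq\hma_{d+1}\ot\hma_0$; and for the middle term the induction hypothesis gives $\Delta_M(\lbar{\frakx})\in\sum_{p+q=d}\hma_p\ot\hma_q$, whence $\mop(\hma_q)\subseteq\hma_{q+1}$ from Eq.~(\mref{eq:subset}) places $(\id\ot P_A)\Delta_M(\lbar{\frakx})$ in $\sum_{p+q=d}\hma_p\ot\hma_{q+1}\subseteq\sum_{r+s=d+1}\hma_r\ot\hma_s$. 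Thus every term lies in $\sum_{p+q=\deg(\frakx)}\hma_p\ot\hma_q$.

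Finally, for $\bre(\frakx)\geq 2$ at depth $d$, I would take the standard decomposition $\frakx=\frakx_1\cdots\frakx_b$. Since consecutive factors alternate between $X$ and $\lc\frakX_\infty\rc$, Lemma~\mref{lem:match} gives $\frakx=\frakx_1\diamond\cdots\diamond\frakx_b$, and because $\Delta_M$ is an algebra homomorphism, $\Delta_M(\frakx)=\Delta_M(\frakx_1)\diamond'\cdots\diamond'\Delta_M(\frakx_b)$ with $\diamond'$ the componentwise product on $\hma\ot\hma$. Each factor $\frakx_i$ has breadth one and depth $\leq d$; those of depth $<d$ are covered by the outer induction hypothesis and those of depth $d$ by the breadth-one case just settled, so $\Delta_M(\frakx_i)\in\sum_{p_i+q_i=\deg(\frakx_i)}\hma_{p_i}\ot\hma_{q_i}$. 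Multiplying these out and applying Lemma~\mref{lem:mgrad} (that is, $\hma_p\diamond\hma_{p'}\subseteq\hma_{p+p'}$) in each tensor slot, the product lands in $\sum\hma_{\sum_i p_i}\ot\hma_{\sum_i q_i}$ with $\sum_i p_i+\sum_i q_i=\sum_i\deg(\frakx_i)=\deg(\frakx)$ by Eq.~(\mref{eq:deg3}), as required. I expect the main obstacle to be not any single computation but the bookkeeping of the induction: one must arrange the nesting so that the breadth reduction only ever calls $\Delta_M$ on strictly lower-depth elements or on breadth-one elements of the current depth, and one must track carefully the $+1$ degree shift caused by $P_A$ (through $\mop(\hma_q)\subseteq\hma_{q+1}$ and $\deg(\lc\lbar{\frakx}\rc)=\deg(\lbar{\frakx})+1$) so that all three cocycle terms land in total degree $\deg(\frakx)$ rather than $\deg(\frakx)-1$.
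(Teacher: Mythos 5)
Your proof is correct, and it reaches the paper's conclusion through the same computational core but with a differently organized induction. The paper proves exactly your per-basis-element claim (its Claim~\ref{cl:copfil}), resting on the same three pillars you use: filteredness of $\Delta_A$ for $\frakx\in X$, the cocycle identity Eq.~(\ref{eq:cocyc}) together with $\mop(\hma_n)\subseteq \hma_{n+1}$ for $\frakx=\lc\lbar{\frakx}\rc$, and multiplicativity of $\Delta_\mo$ combined with Lemma~\ref{lem:mgrad} for breadth $\geq 2$. The difference is the induction scheme: the paper runs an outer induction on $\deg(\frakx)$ with an inner induction on breadth, splitting $\frakx=\frakx_1\frakx_2$ into just two factors, whereas you run the outer induction on $\dep(\frakx)$ and dispose of breadth $\geq 2$ in one stroke by expanding all $b$ standard factors simultaneously. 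Both schemes are well-founded, but for different reasons, and your justification of yours is accurate: the paper's works because the cocycle step lowers degree by exactly one (so $\deg(\lbar{\frakx})=k$ when $\deg(\frakx)=k+1$), while yours needs the fact --- true by the recursive construction of $\frakX_n$, and worth stating explicitly --- that $\dep(\lc\lbar{\frakx}\rc)=\dep(\lbar{\frakx})+1$. Your observation that a plain induction on either depth or breadth alone is not well-founded is also correct, and it illuminates why the paper reaches for degree as its outer statistic; your depth-plus-breadth-ordering nesting is an equally valid resolution. The only points to polish are cosmetic: cite the associativity of $\diamond$ (Theorem~\ref{thm:freeao}~(\ref{it:alg})) alongside Lemma~\ref{lem:match} when writing $\frakx=\frakx_1\diamond\cdots\diamond\frakx_b$, and state the inclusion $A_p\subseteq\hma_p$ as a one-line consequence of Definition~\ref{de:filbase} and Eq.~(\ref{eq:deg1}), as you sketch.
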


\begin{proof}
We verify Eq.~(\mref{eq:cmgrad}) by showing

\begin{claim}
For any $\frakx\in \frakX_\infty$, we have
\begin{equation}
\Delta_\mo(\frakx)=\sum_{(\frakx)} \frakx_{(1)}\ot \frakx_{(2)},
\mlabel{eq:cmgrad1}
\end{equation}
where $\frakx_{(1)}$ and $\frakx_{(2)}$ are non-zero linear multiples of elements of $\frakX_\infty$ with $\deg(\frakx_{(1)})+\deg(\frakx_{(2)})\leq \deg(\frakx)$. Here we have adapted the notation in Remark~\mref{rk:deg}.
\mlabel{cl:copfil}
\end{claim}

To prove this claim we proceed by induction on $\deg(\frakx)\geq 0$. For the initial step of $\deg(\frakx)=0$,
we get $\frakx=1$ and the result holds.
Assume that Claim~(\mref{cl:copfil}) holds for $\frakx\in \hma_k$ and consider $\frakx\in \hma_{k+1}$ for some $k\geq 0$.

In this case, we prove Claim~(\mref{cl:copfil}) by induction on the breadth $b:=\bre( \frakx)\geq 1$. If $b=1$, we have $\frakx\in X\subseteq A$ or $ \frakx=\mop(\lbar{ \frakx})$ for some $\lbar{ \frakx}\in \frak X_\infty$. For the former, Claim~(\mref{cl:copfil}) holds since $\Delta_\mo$ is given by $\Delta_A$ and $A$ is a connected filtered bialgebra by our hypothesis. For the latter, applying the induction hypothesis on $n$, we can write
$$\Delta_\mo(\lbar{ \frakx})=\sum_{(\lbar{ \frakx})}\,\lbar{ \frakx}_{(1)}\ot\lbar{ \frakx}_{(2)},$$
where $\deg(\lbar{ \frakx}_{(1)})+\deg(\lbar{ \frakx}_{(2)}) \leq \deg(\lbar{ \frakx})=k$, with the notion in Remark~\ref{rk:deg}.
By Eq.~(\mref{eq:cocyc}), we have
\begin{eqnarray*}
\Delta_\mo( \frakx)=\Delta_\mo(\mop(\lbar{ \frakx}))
&=& \frakx\ot 1+(\id\ot \mop)\Delta_\mo(\lbar{ \frakx}) +\lambda \lbar{\frakx}\ot 1\\
&=& \frakx\ot 1+\sum_{(\lbar{ \frakx})}\,(\lbar{ \frakx}_{(1)}\ot \mop(\lbar{ \frakx}_{(2)}) +
\lambda \lbar{\frakx}\ot 1.
\end{eqnarray*}
By Eq.~(\mref{eq:dhn}), it is sufficient to show that the sum of degrees of tensor factors in each summand is less than or equal to $k+1$,
which follows from
\begin{eqnarray*}
&&\deg( \frakx)+\deg(1)= \deg( \frakx) \leq k+1,\, \deg( \lbar{\frakx})+\deg(1) = \deg( \lbar{\frakx}) \leq  k,\\
&&\deg(\lbar{ \frakx}_{(1)})+\deg(\mop(\lbar{ \frakx}_{(2)}))
=\deg(\lbar{ \frakx}_{(1)})+\deg(\lbar{ \frakx}_{(2)})+1 \leq k+1.
\end{eqnarray*}

Assume that Claim~(\mref{cl:copfil}) holds for $ \frakx\in \hma_{k+1}$ with $\bre( \frakx)=b$ and consider the case $ \frakx\in \hma_{k+1}$ with $\bre( \frakx)=b+1\geq2$.
Let $ \frakx= \frakx_{1}  \frakx_{2}$, where $ \frakx_{1}, \frakx_{2}\in \frak X_\infty$ with $\bre( \frakx_{1}),\bre( \frakx_{2})\geq1$.
From Eq.~(\mref{eq:deg3}), we have
\begin{equation}
\deg( \frakx_{1})+\deg( \frakx_{2})=\deg( \frakx) \leq k+1.
\mlabel{eq:Num2}
\end{equation}
Write
\begin{equation*}
\begin{aligned}
\Delta_\mo( \frakx_{1})=\sum_{( \frakx_{1})} \frakx_{1(1)}\ot \frakx_{1(2)}\,\text{ and }\, \Delta_\mo( \frakx_{2})=\sum_{( \frakx_{2})} \frakx_{2(1)}\ot \frakx_{2(2)}.
\end{aligned}
\end{equation*}
By the induction hypothesis on $b$, we have
\begin{equation}
\deg( \frakx_{1(1)}) + \deg( \frakx_{1(2)}) \leq \deg( \frakx_{1})\,\text{ and }\,
\deg( \frakx_{2(1)}) + \deg( \frakx_{2(2)}) \leq \deg( \frakx_{2}).
\mlabel{eq:gapnum1}
\end{equation}
So we have
\begin{align*}
\Delta_\mo( \frakx)&=\Delta_\mo( \frakx_{1}  \frakx_{2})
=\Delta_\mo( \frakx_{1}\,\diamond\, \frakx_{2})
=\Delta_\mo( \frakx_{1})\,\diamond\, \Delta_\mo( \frakx_{2})\\
&=\left(\sum_{( \frakx_{1})} \frakx_{1(1)}\ot \frakx_{1(2)}\right)\,\diamond\,
\left(\sum_{( \frakx_{2})} \frakx_{2(1)}\ot \frakx_{2(2)}\right)\\
&=\sum_{( \frakx_{1})}\sum_{( \frakx_{2})} ( \frakx_{1(1)}\,\diamond\,\frakx_{2(1)})\ot
( \frakx_{1(2)}\,\diamond\, \frakx_{2(2)}).
\end{align*}
By Eq.~(\mref{eq:mgrad}),
$$ \frakx_{1(1)}\,\diamond\,\frakx_{2(1)} \in \hma_{\deg(\frakx_{1(1)}) + \deg(\frakx_{2(1)})}
\,\text{ and }\,  \frakx_{1(2)}\,\diamond\, \frakx_{2(2)} \in \hma_{\deg(\frakx_{1(2)}) + \deg(\frakx_{2(2)})},$$
which implies from Eqs.~(\mref{eq:dhn}), (\mref{eq:Num2}) and~(\mref{eq:gapnum1}) that Claim~\mref{cl:copfil} holds.
\end{proof}

We now arrive at our last main result.

\begin{theorem}
Let $A=\cup_{n\geq 0}A_n$ be a connected filtered bialgebra with a filtered basis. Then $\hma = \FNN(A)$ is also a connected filtered bialgebra, and hence a Hopf algebra.
\mlabel{thm:hopf}
\end{theorem}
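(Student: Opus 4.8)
The plan is to recognize that all the substantive work has already been carried out in the preceding lemmas, so that proving Theorem~\mref{thm:hopf} amounts to assembling those results and invoking Lemma~\mref{lem:hopf}. Concretely, Theorem~\mref{thm:bialg} already furnishes the bialgebra $(\hma, \diamond, 1, \Delta_\mo, \vep_\mo)$; it remains only to exhibit a connected filtration on $\hma$ compatible with this bialgebra structure, after which the antipode is produced automatically by Lemma~\mref{lem:hopf}.

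First I would verify that the family $\{\hma_n\}_{n\geq 0}$ defined in Eq.~(\mref{eq:dhn}) is an increasing filtration. This is immediate from the definition of $\deg$: if $\frak x \in \frak X_\infty$ satisfies $\deg(\frak x) \leq n$, then a fortiori $\deg(\frak x) \leq n+1$, so $\hma_n \subseteq \hma_{n+1}$; and Eq.~(\mref{eq:subset}) records that $\hma = \cup_{n\geq 0} \hma_n$. Next, the two compatibility conditions required of a filtered bialgebra are exactly the content of the two preceding lemmas: the multiplicativity $\hma_p \diamond \hma_q \subseteq \hma_{p+q}$ is Lemma~\mref{lem:mgrad} (Eq.~(\mref{eq:mgrad})), and the comultiplicativity $\Delta_\mo(\hma_n) \subseteq \sum_{p+q=n} \hma_p \ot \hma_q$ is Lemma~\mref{lem:cmgrad} (Eq.~(\mref{eq:cmgrad})). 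Together with the exhaustiveness just noted, these show that $\hma$ is a filtered bialgebra in the sense of the definition preceding Lemma~\mref{lem:hopf}.

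It then remains to check connectedness. Here I would again quote Eq.~(\mref{eq:subset}), which states $\hma_0 = \bfk$ and $\hma = \bfk\,1 \oplus \ker \vep_\mo$. Since the unit of $\hma$ is $1$ and its image is $\bfk\,1 = \hma_0$, the two requirements $\hma_0 = \im\, \mu$ and $\hma = \hma_0 \oplus \ker \vep_\mo$ of the connectedness definition are satisfied. With $\hma$ now shown to be a connected filtered bialgebra, Lemma~\mref{lem:hopf} supplies the antipode and concludes that $\hma = \FNN(A)$ is a Hopf algebra.

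I do not expect any genuine obstacle at the level of the theorem itself: the proof is a bookkeeping assembly of Theorem~\mref{thm:bialg}, Eq.~(\mref{eq:subset}), and Lemmas~\mref{lem:mgrad}, \mref{lem:cmgrad}, and~\mref{lem:hopf}. The real technical effort has already been discharged earlier, in establishing that $\deg$ behaves additively under concatenation (Eq.~(\mref{eq:deg3})) and increases by one under $\mop$ (Eq.~(\mref{eq:deg2})), so that Lemmas~\mref{lem:mgrad} and~\mref{lem:cmgrad} go through via the nested inductions on depth, breadth, and degree. The only point warranting a moment's care is matching the hypotheses of the connectedness definition precisely, namely confirming that $\hma_0$ coincides with the image of the unit rather than merely with an abstract copy of $\bfk$, which is exactly why Eq.~(\mref{eq:subset}) has been recorded in the form $\hma_0 = \bfk$ and $\hma = \bfk\,1 \oplus \ker \vep_\mo$.
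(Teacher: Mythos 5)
Your proposal is correct and follows essentially the same route as the paper: the paper's proof likewise invokes Lemma~\mref{lem:hopf} and reduces everything to Lemmas~\mref{lem:mgrad}, \mref{lem:cmgrad} and Eq.~(\mref{eq:subset}). Your write-up merely makes explicit the bookkeeping (increasing filtration, exhaustion, $\hma_0=\bfk$, $\hma=\bfk\,1\oplus\ker\vep_\mo$) that the paper leaves implicit, which is entirely consistent with its argument.
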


\begin{proof}
By Lemma~\mref{lem:hopf}, we just need to prove that $\FNN(A)$ is a connected filtered bialgebra. This follows from Lemmas~\mref{lem:mgrad}, \mref{lem:cmgrad} and Eq.~(\mref{eq:subset}).
\end{proof}

\noindent {\bf Acknowledgements}: This work was supported by the National Natural Science Foundation of
China (No.~11771190), the
Fundamental Research Funds for the Central Universities (No.~lzujbky-2017-162) and the Natural Science Foundation of Gansu Province (No.~17JR5RA175).
The authors thank the referees for helpful suggestions.

\end{document}